\numberwithin{equation}{section}
\theoremstyle{plain}
\newtheorem{theorem}{Theorem}[section]
\newtheorem{proposition}[theorem]{Proposition}
\newtheorem{lemma}[theorem]{Lemma}
\newtheorem{corollary}[theorem]{Corollary}
 \theoremstyle{definition}
\newtheorem{definition}[theorem]{Definition}
\theoremstyle{remark}
\newtheorem{remark}[theorem]{Remark}
\begin{document}

\title{Powers of Dehn Twists generating Right-Angled Artin Groups}

\author{Donggyun Seo}

\address{Seoul National University \\ Department of Mathematical Sciences \\
Seoul, Korea} 

\email{seodonggyun@snu.ac.kr}

 \subjclass[2010]{Primary: 20F65. Secondary: 20E08, 57M60}

 \keywords{right-angled Artin group, Dehn twist, mapping class group}

\begin{abstract}
We give a bound for the exponents of powers of Dehn twists to generate a right-angled Artin group.
Precisely, if $\mathcal{F}$ is a finite collection of pairwise distinct simple closed curves on a finite type surface and if $N$ denotes the maximum of the intersection numbers of all pairs of curves in $\mathcal{F}$, then we prove that $\{T_\gamma^n \,\vert\, \gamma \in \mathcal{F} \}$ generates a right-angled Artin group for all $n \geq N^2 + N + 3$.
This extends a previous result of Koberda, who proved the existence of a bound possibly depending on the  underlying hyperbolic structure of the surface.
In the course of the proof, we obtain a universal bound depending only on the topological type of the surface in certain cases, which partially answers a question due to Koberda.
\end{abstract}

\maketitle

\section{Introduction} \label{sec:introduction}

\subsection{Background and Main Theorem}

Throughout this paper, we fix an orientable surface $\Sigma$ of genus $n_g$ with $n_p$ punctures.
An \emph{essential simple closed curve} is a circle embedded in $\Sigma$, which is not homotopic to a point and a puncture. 
We will always assume that $3 n_g + n_p - 3 > 0$, so that $\Sigma$ admits a hyperbolic structure and contains at least one essential simple closed curve.
The \emph{(geometric) intersection number} between two essential simple closed curves $\alpha$ and $\beta$ is the infimum of $\lvert \alpha' \cap \beta' \rvert$ among all simple closed curves $\alpha'$ and $\beta'$ homotopic to $\alpha$ and $\beta$, respectively.

If $\operatorname{Homeo}^+(\Sigma)$ is the group of orientation-preserving self-homeomorphisms fixing punctures pointwise and $\operatorname{Homeo}_0(\Sigma)$ is the group of self-homeomorphisms homotopic to the identity, then the \emph{mapping class group} of $\Sigma$ is defined as 
$$\operatorname{Mod}(\Sigma) = \operatorname{Homeo}^+(\Sigma)/ \operatorname{Homeo}_0(\Sigma).$$
For an essential simple closed curve $\gamma$, a (right-handed) \emph{Dehn twist} along $\gamma$ is a self-homeomorphism which is constructed by twisting a regular neighborhood of $\gamma$ once.
A more precise definition, especially the meaning of the term \emph{right-handed}, is given in Remark \ref{rem:ann_twi}; see also Farb--Margalit's book \cite{fm12}.
We write $T_\gamma$ for the mapping class of a (right-handed) Dehn twist along $\gamma$.

On the other hand, if $\Gamma = (V(\Gamma), E(\Gamma))$ is a simplicial graph, the \emph{right-angled Artin group} of $\Gamma$ is the group with the presentation $$A(\Gamma) = \langle V(\Gamma) \,\vert\, [a, b] = 1\; \text{if}\; \{ a, b \} \in E(\Gamma) \rangle.$$

Let $\operatorname{lcm}\{a, b\}$ be the least common multiple of integers $a$ and $b$.
We set $\operatorname{lcm}\{a, 0\} = 0$ for all integer $a$.
If $\mathcal{F}$ is a set of essential simple closed curves, we write $\Delta(\mathcal{F}) := \max\{\operatorname{lcm}\{i(\alpha, \beta), i(\beta, \gamma)\} \,\vert\, \alpha, \beta, \gamma \in \mathcal{F}\}$.
Note that $\Delta(\mathcal{F}) \geq \max_{\alpha, \beta \in \mathcal{F}}i(\alpha, \beta)$.
A \emph{multicurve} is a set of pairwise disjoint essential simple closed curves.
Then our main theorem is the following.

\begin{theorem}[Main theorem] \label{thm:main}
Let $\mathcal{F}$ be a nonempty finite set of essential simple closed curves on $\Sigma$ which are not homotopic to each other.
Let $\Gamma$ be the simplicial graph whose vertex set is $\mathcal{F}$ satisfying that two vertices are joined by an edge if and only if their intersection number is zero.
If $\Delta(\mathcal{F}) \geq 2$ and $m$ is an integer satisfying that
$$\lvert m \rvert \geq \max_{\substack{\alpha, \beta, \gamma \in \mathcal{F}, \\ i(\beta, \gamma) > 0}}\frac{\Delta(\mathcal{F}) + 2 \cdot i(\alpha, \gamma) + 1}{i(\beta, \gamma)} + 2,$$
then the surjective homomorphism $\varphi: A(\Gamma) \to \langle \{ T_\gamma^m \,\vert\, \gamma \in \mathcal{F} \} \rangle$ sending each alphabet $\gamma$ to $T_{\gamma}^m$ is an isomorphism.
Meanwhile, if $\Delta(\mathcal{F}) = 1$ and $\lvert m \rvert \geq 7$, then $\varphi$ is an isomorphism.
\end{theorem}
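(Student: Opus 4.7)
The plan is to prove injectivity of $\varphi$; surjectivity is immediate from the construction. Let $w \in A(\Gamma) \setminus \{1\}$ and fix a reduced expression $w = \gamma_{i_1}^{e_1} \gamma_{i_2}^{e_2} \cdots \gamma_{i_k}^{e_k}$ using the standard normal form for right-angled Artin groups (no letter can be transported via commutations to cancel another of the same type). The technical backbone will be the classical Dehn twist intersection inequality: for essential simple closed curves $\alpha, \beta, \gamma$ with $i(\alpha, \gamma), i(\beta, \gamma) > 0$ and an integer $n$,
$$\bigl|\, |n|\cdot i(\alpha, \gamma) \cdot i(\gamma, \beta) - i(\alpha, \beta) \,\bigr| \;\leq\; i(T_\gamma^n \alpha, \beta) \;\leq\; |n| \cdot i(\alpha, \gamma) \cdot i(\gamma, \beta) + i(\alpha, \beta).$$
The idea is to exhibit a test curve $\delta$ such that $\varphi(w)(\delta) \neq \delta$, certified by tracking intersection numbers through the prefixes of $\varphi(w)$; this replaces the non-constructive boundary-action argument that Koberda used.

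Write $w_j := T_{\gamma_{i_1}}^{me_1} \cdots T_{\gamma_{i_j}}^{me_j}$. Because consecutive letters in the reduced form may still commute, for each $j$ I introduce $j^+ := \min\{\ell > j : i(\gamma_{i_j}, \gamma_{i_\ell}) > 0\}$; the intermediate twists commute with $T_{\gamma_{i_j}}$, so the effective ``next active twist'' after $T_{\gamma_{i_j}}^{me_j}$ is $T_{\gamma_{i_{j^+}}}^{me_{j^+}}$. Choosing $\delta$ to intersect $\gamma_{i_1}$ nontrivially, I propagate in parallel a lower-bound track $L_j := i(w_j(\delta), \gamma_{i_{j^+}})$ and upper-bound tracks $U_j(\gamma) := i(w_j(\delta), \gamma)$ for each $\gamma \in \mathcal{F}$. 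Applying the two-sided inequality at step $j^+$ yields
$$L_{j^+} \;\geq\; |m| \cdot L_j \cdot i(\gamma_{i_{j^+}}, \gamma_{i_{j^{++}}}) \;-\; U_j(\gamma_{i_{j^{++}}}),$$
while the upper tracks satisfy matching recursions controlled by the pairwise intersection data of $\mathcal{F}$. The quantity $\Delta(\mathcal{F})$ enters precisely as the universal ceiling on $\operatorname{lcm}\{i(\gamma_{i_j}, \gamma_{i_{j^+}}), i(\gamma_{i_{j^+}}, \gamma_{i_{j^{++}}})\}$ governing the amplification factor. The threshold on $|m|$ in the hypothesis is designed so that the recursion forces $L_{j^+} \geq L_j + 1$ across every admissible triple $(\alpha, \beta, \gamma) \in \mathcal{F}^3$ with $i(\beta,\gamma) > 0$; iterating to step $k$ gives $L_k > 0$, hence $\varphi(w)(\delta)$ still intersects some curve in $\mathcal{F}$ nontrivially, and so $\varphi(w) \neq 1$.

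The main obstacle will be the parallel bookkeeping: each Dehn twist application contributes an error term $i(\alpha, \beta)$ that could overwhelm the amplification $|m| \cdot i(\alpha, \gamma) \cdot i(\gamma, \beta)$ unless $|m|$ is calibrated to dominate $\Delta(\mathcal{F})$ plus accumulated upper-bound error at each step, which is exactly what the maximum in the hypothesis encodes. Secondary obstacles include justifying the $j^+$ construction via the graph-monoid normal form, and selecting $\delta$ appropriately in degenerate starting configurations (e.g., when the first letter is isolated in $\Gamma$ within $\mathcal{F}$); these are standard but need separate small lemmas. For the special regime $\Delta(\mathcal{F}) = 1$, all intersections are $0$ or $1$, so the error terms collapse and the recursion simplifies to roughly $L_{j^+} \geq (|m|-1) L_j - 1$; a careful analysis of initial and terminal effects then yields the uniform bound $|m| \geq 7$, independent of $\mathcal{F}$.
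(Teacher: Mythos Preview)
Your plan has a genuine gap: the mechanism you invoke cannot produce the lcm-based bound that the theorem asserts. The two-sided twist inequality amplifies by the \emph{product} $i(\alpha,\gamma)\,i(\gamma,\beta)$, and its error term is the single number $i(\alpha,\beta)$; nowhere does a least common multiple appear. Your sentence that ``$\Delta(\mathcal F)$ enters precisely as the universal ceiling on $\operatorname{lcm}\{\dots\}$ governing the amplification factor'' is therefore unsupported --- you have correctly identified the obstacle (the error term can overwhelm the gain) but then simply asserted that the threshold in the hypothesis resolves it. In fact the ratio $U_j(\gamma_{i_{(j^+)^+}})/L_j$ that must be controlled after each step is governed, to leading order in $|m|$, by ratios of the form $i(\gamma_{i_j},\gamma_{i_{(j^+)^+}})/i(\gamma_{i_j},\gamma_{i_{j^+}})$, not by any lcm; pushing your recursion through honestly yields a threshold shaped by quotients and products of intersection numbers, which does not coincide with $\frac{\Delta(\mathcal F)+2i(\alpha,\gamma)+1}{i(\beta,\gamma)}+2$. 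Two further loose ends you flag but do not close: the intermediate twists between $j$ and $j^+$ commute with $\gamma_{i_j}$ but need not commute with $\gamma_{i_{j^+}}$, so they perturb both $L$ and $U$ in ways your recursion ignores; and ``$L_k>0$'' does not by itself yield $\varphi(w)(\delta)\ne\delta$ unless you have arranged the terminal comparison curve to satisfy $i(\delta,\cdot)<L_k$, which you have not.

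The paper obtains the lcm from an entirely different source and thereby avoids numerical error accumulation altogether. It works in $\mathbb H^2$: for a multicurve $\mathcal A\ni\alpha$ and a curve $\beta\pitchfork\alpha$, it defines $\operatorname{PP}_n(\mathcal A,\alpha,\beta)$ to be the set of curves $\gamma$ admitting a lift $\tilde\gamma$ that, together with a single lift $\tilde\alpha$, crosses at least $n$ lifts of $\beta$ (subject to a uniqueness condition on lifts of $\mathcal A$). The lcm enters through a counting lemma on the universal cover: if $\tilde\alpha\pitchfork\tilde\gamma$, then at most $\operatorname{lcm}\{i(\alpha,\beta),i(\beta,\gamma)\}$ lifts of $\beta$ can cross both, proved via the dual tree of $\beta$ and a triangle-uniqueness argument on $\Sigma$. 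Hence no curve of $\mathcal F$ belongs to $\operatorname{PP}_N$ once $N>\Delta(\mathcal F)$, while the stated bound on $|m|$ guarantees that each syllable of a central-form word pushes the test curve into the next $\operatorname{PP}_N$ set, and the remaining commuting twists in that syllable preserve it. Because membership in a fixed $\operatorname{PP}_N$ is a qualitative condition rather than a growing number, there is no error term to track.
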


\begin{remark} \label{rem:surj}
In Theorem \ref{thm:main}, because two Dehn twists $T_\alpha^m$ and $T_\beta^m$ commute with each other if and only if $i(\alpha, \beta) = 0$, there is a surjective homomorphism $\varphi: A(\Gamma) \to \langle \{ T_\gamma^m \,\vert\, \gamma \in \mathcal{F} \} \rangle$ such that the following diagram is commutative:
\begin{center}
\begin{tikzcd}
& F(\mathcal{F}) \arrow[rd, twoheadrightarrow] \arrow[ld, twoheadrightarrow] & \\
A(\Gamma) \arrow[rr, dashrightarrow, two heads, "\varphi"] & & \langle \{ T_\gamma^m \,\vert\, \gamma \in \mathcal{F} \} \rangle
\end{tikzcd}
\end{center}
where $F(\mathcal{F})$ is the free group generated by $\mathcal{F}$.
\end{remark}

\begin{corollary} \label{cor:unif_dehn}
Let $\mathcal{F}$ be a finite set of essential simple closed curves on $\Sigma$ which are not homotopic to each other.
Assume that there is an integer $N \geq 2$ such that for every $\alpha, \beta \in \mathcal{F}$, either $i(\alpha, \beta) = 0$ or $i(\alpha, \beta) = N$.
Then the group generated by $\{ T_\gamma^6 \,\vert\, \gamma \in \mathcal{F} \}$ is isomorphic to a right-angled Artin group.
\end{corollary}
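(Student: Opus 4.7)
The plan is simply to apply Theorem~\ref{thm:main} with $m = 6$, after verifying that the numerical hypothesis of that theorem is met under the assumptions of the corollary. The underlying reason a single integer bound works is that, in this setup, every intersection number takes only one of two values ($0$ or $N$), which pins $\Delta(\mathcal{F})$ exactly equal to $N$ and collapses the maximum appearing in the hypothesis to a trivial estimate.

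I would begin by disposing of the degenerate case in which every pair of curves in $\mathcal{F}$ is disjoint: there $\Gamma$ is the complete graph on $\mathcal{F}$, all $T_\gamma^6$ commute, and the generated group is free abelian and hence (trivially) a right-angled Artin group. So I may assume at least one pair intersects, necessarily with intersection number $N$.

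Next, I compute $\Delta(\mathcal{F})$. Since every $i(\alpha, \beta) \in \{0, N\}$, every $\operatorname{lcm}\{i(\alpha, \beta), i(\beta, \gamma)\}$ lies in $\{0, N\}$, and the maximum is attained, so $\Delta(\mathcal{F}) = N \geq 2$ and the first branch of Theorem~\ref{thm:main} applies. When $i(\beta, \gamma) > 0$, necessarily $i(\beta, \gamma) = N$, and in general $i(\alpha, \gamma) \leq N$. Therefore
$$\max_{\substack{\alpha, \beta, \gamma \in \mathcal{F}, \\ i(\beta, \gamma) > 0}}\frac{\Delta(\mathcal{F}) + 2 \cdot i(\alpha, \gamma) + 1}{i(\beta, \gamma)} + 2 \;\leq\; \frac{3N + 1}{N} + 2 \;=\; 5 + \frac{1}{N} \;\leq\; \frac{11}{2} < 6.$$
Hence $|m| = 6$ satisfies the hypothesis of Theorem~\ref{thm:main}, which yields the claimed isomorphism $A(\Gamma) \cong \langle \{T_\gamma^6 \mid \gamma \in \mathcal{F}\} \rangle$.

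There is no real obstacle beyond this arithmetic; the only point worth flagging is that the assumption $N \geq 2$ is exactly what keeps $5 + \tfrac{1}{N}$ strictly below $6$. If $N$ were allowed to equal $1$, one would instead fall into the second branch of Theorem~\ref{thm:main} and need $|m| \geq 7$, which is why the hypothesis $N \geq 2$ is made explicit in the statement of the corollary.
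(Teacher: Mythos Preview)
Your proposal is correct and follows essentially the same approach as the paper: reduce to the non-multicurve case, observe that $\Delta(\mathcal{F}) = N$, bound the maximum in Theorem~\ref{thm:main} by $5 + \tfrac{1}{N} < 6$, and apply the theorem with $m = 6$. Your write-up is slightly more explicit about the degenerate case and about why the hypothesis $N \geq 2$ is needed, but the argument is the same.
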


\begin{proof}
Assume that $\mathcal{F}$ is not a multicurve.
By the assumption, $\Delta(\mathcal{F}) = N$ and
$\max_{\alpha, \beta, \gamma \in \mathcal{F}} \frac{\Delta(\mathcal{F}) + 2 \cdot i(\alpha, \gamma) + 1 }{i(\beta, \gamma)} + 2 \leq 5 + \frac{1}{N}.$
By Theorem \ref{thm:main}, the group generated by $\{ T_\gamma^6 \,\vert\, \gamma \in \mathcal{F} \}$ is isomorphic to a right-angled Artin group.
\end{proof}

Corollary \ref{cor:unif_dehn} is a partial answer to Koberda's question \cite[Question 1.16]{k12b}.

\begin{corollary}
Let $\mathcal{F}$ be a finite set of essential simple closed curves on $\Sigma$ which are not homotopic to each other.
If there is an integer $N \geq 2$ such that $i(\alpha, \beta) \leq N$ for all $\alpha, \beta \in \mathcal{F}$, then the group generated by $\{ T_\gamma^m \,\vert\, \gamma \in \mathcal{F} \}$ is isomorphic to a right-angled Artin group for all integers $m \geq N^2 + N + 3$.
\end{corollary}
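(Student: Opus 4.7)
My plan is to reduce the corollary directly to Theorem~\ref{thm:main} after extracting a sharp upper bound on $\Delta(\mathcal{F})$ in terms of $N$. I would first split off two degenerate cases. If $\mathcal{F}$ is a multicurve, then the generators $\{T_\gamma^m\}$ commute pairwise, so they generate a free abelian group of rank $|\mathcal{F}|$ (by the standard fact that Dehn twists along pairwise disjoint, pairwise non-homotopic curves are independent), and this is the right-angled Artin group on the complete graph with vertex set $\mathcal{F}$. If instead $\mathcal{F}$ is not a multicurve but every nonzero intersection number among curves in $\mathcal{F}$ equals $1$, then $\Delta(\mathcal{F}) = 1$, and since the hypothesis $N \geq 2$ forces $m \geq N^2 + N + 3 \geq 9 > 7$, the second clause of Theorem~\ref{thm:main} applies directly.

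In the remaining case $\Delta(\mathcal{F}) \geq 2$, the key step is the elementary inequality
$$\operatorname{lcm}\{a, b\} \leq N(N-1) \quad \text{for all integers } 1 \leq a, b \leq N \text{ with } N \geq 2,$$
which I would prove by splitting on whether $a = b$: if $a = b$, then $\operatorname{lcm}\{a,a\} = a \leq N \leq N(N-1)$; and if $a \neq b$, one of them is at most $N-1$, so $\operatorname{lcm}\{a,b\} \leq ab \leq N(N-1)$. Applied to the pairs of intersection numbers $i(\alpha,\beta), i(\beta,\gamma) \leq N$ appearing in the definition of $\Delta$, this yields the uniform bound $\Delta(\mathcal{F}) \leq N^2 - N$.

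Finally, substituting $\Delta(\mathcal{F}) \leq N^2 - N$, $i(\alpha,\gamma) \leq N$, and $i(\beta,\gamma) \geq 1$ into the threshold from Theorem~\ref{thm:main} yields
$$\max_{\substack{\alpha, \beta, \gamma \in \mathcal{F}, \\ i(\beta,\gamma) > 0}} \frac{\Delta(\mathcal{F}) + 2\,i(\alpha,\gamma) + 1}{i(\beta,\gamma)} + 2 \;\leq\; (N^2 - N) + 2N + 1 + 2 \;=\; N^2 + N + 3,$$
which matches the hypothesis on $m$ exactly, so Theorem~\ref{thm:main} produces the desired isomorphism. I do not expect a serious obstacle: the argument is a direct bookkeeping application of the main theorem. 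The only subtlety is recognizing that the sharper estimate $\operatorname{lcm}\{a,b\} \leq N(N-1)$, rather than the naive bound $\operatorname{lcm}\{a,b\} \leq N^2$, is precisely what shaves the final threshold from $N^2 + 2N + 3$ down to the stated value $N^2 + N + 3$.
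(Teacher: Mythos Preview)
Your proposal is correct and follows essentially the same route as the paper: both reduce to Theorem~\ref{thm:main} via the bound $\Delta(\mathcal{F}) \leq N(N-1)$ and the substitution $i(\alpha,\gamma) \leq N$, $i(\beta,\gamma) \geq 1$. You are simply more explicit than the paper in justifying the $\operatorname{lcm}$ estimate and in separating out the degenerate cases (multicurve and $\Delta(\mathcal{F})=1$), which the paper handles tacitly.
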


\begin{proof}
Assume that $\mathcal{F}$ is not a multicurve.
Since $\Delta(\mathcal{F}) \leq N(N-1)$, we have $\max_{\alpha, \beta, \gamma \in \mathcal{F}} \frac{\Delta(\mathcal{F}) + 2 \cdot i(\alpha, \gamma) + 1}{i(\beta, \gamma)} + 2 \leq \frac{N(N-1) + 2N + 1}{1} +2 = N^2 + N +3$.
Therefore, by Theorem \ref{thm:main}, the group generated by $\{ T_\gamma^m \,\vert\, \gamma \in \mathcal{F} \}$ is isomorphic to a right-angled Artin group whenever $m \geq N^2 + N + 3$.
\end{proof}

For two Dehn twists, there is a complete classification for the isomorphism type of $\langle T_\alpha^m, T_\beta^n \rangle$. (\emph{cf}. \cite[Table in Section 3.5.2]{fm12})
One can also verify from the table that the second powers of two Dehn twists generate a free abelian or free group of rank $2$.
Clay--Leininger--Mangahas \cite{clm12} showed that every right-angled Artin group can be quasi-isometrically embedded into some mapping class group of a surface of finite type.
Kim--Koberda \cite{kk16} proved that every embedding $A(\Gamma) \hookrightarrow \operatorname{Mod}(\Sigma)$ implies a graph embedding $\Gamma \hookrightarrow \mathcal{C}(\Sigma)$ into the curve graph of a surface $\Sigma$ if $\Sigma$ has low complexity.

Koberda \cite{k12b} proved that sufficiently large powers of pseudo-Anosov mapping classes on subsurfaces and Dehn twists generate a right-angled Artin group.
Kuno \cite{ku17} showed that handlebody groups contains right-angled Artin groups as subgroups.
Funar \cite{f14} proved that, if the set of simple closed curves is \emph{sparse}, then the second powers of their Dehn twists generate a right-angled Artin group.
Runnels \cite{r19} found other bounds of exponents for powers of Dehn twists in a different setting.
Crisp--Paris \cite{cp01} solved a similar problem for Artin groups.

\subsection{Hyperbolic Structure}

Let us now fix a hyperbolic structure of $\Sigma$ given by a covering map 
\[\xi : \mathbb{H}^2 \to \Sigma.\]
Unless stated otherwise, all essential simple closed curves we consider are assumed to be geodesics. In particular, if we have two distinct essential simple closed geodesics $\alpha$ and $\beta$, we have 
\[ \lvert \alpha \cap \beta \rvert = i(\alpha, \beta).\]

\subsection{The Proof of Theorem \ref{thm:main}}

Two geodesics $\tilde\alpha$ and $\tilde\beta$ on $\mathbb{H}^2$ are said to \emph{cross} each other, written by $\tilde\alpha \pitchfork \tilde\beta$, if they intersect each other transversally.
The following is a technical definition to prove Theorem \ref{thm:main}.

\begin{definition}[First definition of $\operatorname{PP}_n$] \label{def:pp_simple}
Let $\mathcal{A}$ be a multicurve on $\Sigma$.
Let $\alpha$ be a simple closed geodesic in $\mathcal{A}$.
Let $\beta$ be a simple closed geodesic crossing $\alpha$.
For an integer $n > 2$, a simple closed geodesic $\gamma$ is said to be contained in $\operatorname{PP}_n( \mathcal{A}, \alpha, \beta )$ if for some lift $\tilde\gamma$ of $\gamma$, for some lift $\tilde\alpha$ of $\alpha$ and for some (at least) $n$ lifts $\tilde\beta_1, \dots, \tilde\beta_n$ of $\beta$, the following hold.
(See Figure \ref{fig:PP_n}.)
\begin{figure}
\centering
\begingroup%
  \makeatletter%
  \providecommand\color[2][]{%
    \errmessage{(Inkscape) Color is used for the text in Inkscape, but the package 'color.sty' is not loaded}%
    \renewcommand\color[2][]{}%
  }%
  \providecommand\transparent[1]{%
    \errmessage{(Inkscape) Transparency is used (non-zero) for the text in Inkscape, but the package 'transparent.sty' is not loaded}%
    \renewcommand\transparent[1]{}%
  }%
  \providecommand\rotatebox[2]{#2}%
  \newcommand*\fsize{\dimexpr\f@size pt\relax}%
  \newcommand*\lineheight[1]{\fontsize{\fsize}{#1\fsize}\selectfont}%
  \ifx\svgwidth\undefined%
    \setlength{\unitlength}{232.63045904bp}%
    \ifx\svgscale\undefined%
      \relax%
    \else%
      \setlength{\unitlength}{\unitlength * \real{\svgscale}}%
    \fi%
  \else%
    \setlength{\unitlength}{\svgwidth}%
  \fi%
  \global\let\svgwidth\undefined%
  \global\let\svgscale\undefined%
  \makeatother%
  \begin{picture}(1,0.51326879)%
    \lineheight{1}%
    \setlength\tabcolsep{0pt}%
    \put(0,0){\includegraphics[width=\unitlength,page=1]{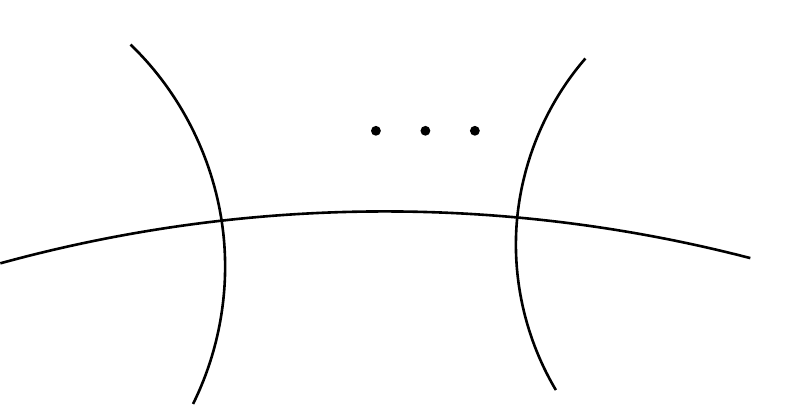}}%
    \put(0.94781562,0.17513369){\color[rgb]{0,0,0}\makebox(0,0)[lt]{\lineheight{1.25}\smash{\begin{tabular}[t]{l}$\tilde\alpha$\end{tabular}}}}%
    \put(0.94462574,0.03431083){\color[rgb]{0,0,0}\makebox(0,0)[lt]{\lineheight{1.25}\smash{\begin{tabular}[t]{l}$\tilde\gamma$\end{tabular}}}}%
    \put(0.14086995,0.4664934){\color[rgb]{0,0,0}\makebox(0,0)[lt]{\lineheight{1.25}\smash{\begin{tabular}[t]{l}$\tilde\beta_1$\end{tabular}}}}%
    \put(0.70291996,0.46649278){\color[rgb]{0,0,0}\makebox(0,0)[lt]{\lineheight{1.25}\smash{\begin{tabular}[t]{l}$\tilde\beta_n$\end{tabular}}}}%
    \put(0.31066728,0.46649349){\color[rgb]{0,0,0}\makebox(0,0)[lt]{\lineheight{1.25}\smash{\begin{tabular}[t]{l}$\tilde\beta_2$\end{tabular}}}}%
    \put(0,0){\includegraphics[width=\unitlength,page=2]{pp_n.pdf}}%
  \end{picture}%
\endgroup%

\caption{The condition for $\gamma \in \operatorname{PP}_n(\mathcal{A}, \alpha, \beta)$}
\label{fig:PP_n}
\end{figure}
\begin{enumerate}[label=(\roman*)]
\item For each $i \in \{1, \dots, n\}$, the lifts $\tilde\alpha$, $\tilde\beta_i$ and $\tilde\gamma$ cross each other.
\item If a lift $\tilde\alpha'$ of $\mathcal{A}$ crosses both $\tilde\gamma$ and some $\tilde\beta_i$, then $\tilde\alpha' = \tilde\alpha$.
\end{enumerate}
\end{definition}

The general definition of $\operatorname{PP}_n$ can be found in Definition \ref{def:ppp}.
Roughly speaking, the indicator $n$ of $\operatorname{PP}_n(\mathcal{A}, \alpha, \beta)$ is a ``combinatorial tangent'' of a simple closed geodesic on the grid of $\alpha$ (vertical) and $\beta$ (horizontal).
That is, if a simple closed geodesic $\gamma$ belongs to $\operatorname{PP}_n(\mathcal{A}, \alpha, \beta)$ with large $n$, it means that $\gamma$ follows $\alpha$ for a long time with respect to $\mathcal{A}$ and $\beta$.
It is a key notion for the proof of Theorem \ref{thm:main} to overcome difficulties.
(Compare with \cite[Section 5.7]{k12b}.)

\begin{theorem} \label{thm:pp_main}
Let $\mathcal{A}$ and $\mathcal{B}$ be multicurves on $\Sigma$ satisfying that $i(\alpha, \beta) > 0$ for some $\alpha \in \mathcal{A}$ and $\beta \in \mathcal{B}$.
Then the following hold.

\begin{enumerate}

\item \label{enum:pmn1}
If a simple closed geodesic $\gamma$ belongs to $\operatorname{PP}_n(\mathcal{A}, \alpha, \beta)$, then $$n \leq \operatorname{lcm}\{i(\alpha, \beta), i(\beta, \gamma)\}.$$

\item \label{enum:pmn2}
Let $\gamma$ be a simple closed geodesic crossing $\beta$.
If $n$ is an integer more than two and $m$ is another integer satisfying that
$$\lvert m \rvert \geq \frac{n + 2 \cdot i(\alpha, \gamma)}{i(\beta, \gamma)} + 2,$$
then $$T_\beta^m \left( \{ \alpha \} \cup \operatorname{PP}_n(\mathcal{A}, \alpha, \beta) \right) \subseteq \operatorname{PP}_n(\mathcal{B}, \beta, \gamma).$$

\item \label{enum:pmn3}
For every $\gamma \in \mathcal{A} - \{ \alpha \}$ and $m \in \mathbb{Z}$, we have $$T_{\gamma}^m(\operatorname{PP}_n(\mathcal{A}, \alpha, \beta)) \subseteq \operatorname{PP}_n(\mathcal{A}, \alpha, \beta).$$
\end{enumerate}
\end{theorem}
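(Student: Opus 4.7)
The plan is to treat the three parts separately, all rooted in the geometry of geodesics in $\mathbb{H}^2$ and the covering action of $\pi_1(\Sigma)$. Throughout, write $p = i(\alpha, \beta)$ and $q = i(\beta, \gamma)$.

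For part \eqref{enum:pmn1}, I would consider the $n$ geodesic segments $[P_i, Q_i] \subset \tilde\beta_i$ from $P_i = \tilde\alpha \cap \tilde\beta_i$ to $Q_i = \tilde\gamma \cap \tilde\beta_i$. Distinct lifts of the simple closed geodesic $\beta$ are pairwise disjoint in $\mathbb{H}^2$, so projecting these segments to $\Sigma$ yields $n$ arcs on $\beta$ from points of $\alpha \cap \beta$ to points of $\beta \cap \gamma$. Two such projected arcs coincide only if the corresponding $\tilde\beta_i$ and $\tilde\beta_j$ differ by a deck transformation stabilizing both $\tilde\alpha$ and $\tilde\gamma$; since $\alpha$ crosses $\gamma$, this common stabilizer is trivial, so the $n$ projected arcs are pairwise distinct. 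An elementary combinatorial-periodicity argument on the universal cover $\mathbb{R} \to \beta$---using that the sequence of red ($\alpha \cap \beta$) and blue ($\beta \cap \gamma$) markings along the $P_i \to Q_i$ intervals repeats with period $\operatorname{lcm}(p, q)$---then yields $n \leq \operatorname{lcm}(p, q)$.

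For part \eqref{enum:pmn2}, the plan is to construct witnessing lifts explicitly. Fix a representative $\tau$ of $T_\beta^m$ supported in a thin annular neighborhood of $\beta$ and a lift $\tilde\tau \colon \mathbb{H}^2 \to \mathbb{H}^2$. For $\delta \in \{\alpha\} \cup \operatorname{PP}_n(\mathcal{A}, \alpha, \beta)$, the image $\tilde\tau(\tilde\delta)$ executes a spiral of $|m|$ wraps around a chosen lift $\tilde\beta$, and each wrap picks up $q$ intersections with lifts of $\gamma$, giving on the order of $|m| q$ candidate lifts $\tilde\gamma_i$. I would discard two wraps near the endpoints of the spiral as a buffer (accounting for the constant $+2$) and up to $2 \cdot i(\alpha, \gamma)$ candidates that could be met by lifts of $\mathcal{B}$ entering through the untwisted portion of $\tilde\delta$ (accounting for the $2 \cdot i(\alpha, \gamma)/i(\beta,\gamma)$ term); the remaining candidates number at least $n$ by hypothesis on $|m|$. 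Condition (i) then follows from a standard linking-at-infinity argument after straightening $\tilde\tau(\tilde\delta)$ to a geodesic. Condition (ii) is the technical heart: any interfering lift $\tilde\beta' \neq \tilde\beta$ of $\mathcal{B}$ would have to cross both $\widetilde{T_\beta^m(\delta)}$ and some chosen $\tilde\gamma_i$ inside the spiral, which the buffer and the discarded candidates are designed to exclude. This is where I expect the main obstacle to lie.

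For part \eqref{enum:pmn3}, since $\gamma$ and $\alpha$ lie in the multicurve $\mathcal{A}$ they are disjoint, so a representative $\tau$ of $T_\gamma^m$ can be chosen with support in an annular neighborhood of $\gamma$ disjoint from $\alpha$ and from every other curve of $\mathcal{A}$. Moreover, condition (ii) of $\operatorname{PP}_n$ guarantees that any lift of $\gamma$ crossing $\tilde\delta$ is disjoint from every $\tilde\beta_i$, so a suitable lift $\tilde\tau$ may be chosen to be the identity on neighborhoods of $\tilde\alpha$ and of each $\tilde\beta_i$. Then $\tilde\tau(\tilde\delta)$ crosses $\tilde\alpha$ and each $\tilde\beta_i$ at the same points as $\tilde\delta$, and passing to its geodesic representative preserves these crossings via linking of ideal endpoints, so condition (i) transfers. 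Checking condition (ii) for the transformed data separates into two cases according to whether a hypothetical interfering lift is a lift of $\gamma$---excluded because distinct lifts of $\gamma$ are disjoint and lifts of $\gamma$ crossing $\tilde\delta$ are forbidden by (ii) for the original data---or a lift of some other curve in $\mathcal{A}$---excluded because such a lift is disjoint from every lift of $\gamma$ and hence unaffected by $\tilde\tau$.
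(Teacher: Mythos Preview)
Your approach is more direct (working in $\mathbb{H}^2$ with linking-at-infinity arguments) whereas the paper routes everything through dual trees $\mathcal{Y}_\gamma$ and the $\pi_1(\Sigma)$-equivariant projections $\Phi_\gamma$. That difference is most consequential in part \eqref{enum:pmn2}: the paper does \emph{not} count intersections along the spiral of $\tilde\tau(\tilde\delta)$. Instead it first reduces to a configuration of three adjacent lifts $\tilde\beta_{-1},\tilde\beta_0,\tilde\beta_1$ of $\mathcal{B}$ (with $\tilde\beta_0$ the unique lift separating the other two), then computes everything on the dual tree $\mathcal{Y}_\gamma$: the bound $2\cdot i(\alpha,\gamma)$ is the distance $d_\gamma(\Phi_\gamma(\tilde\beta_{-1}),\Phi_\gamma(\tilde\beta_1))$, and the constant $+2$ comes from two closest-point projections of length at most $i(\beta,\gamma)$. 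Condition (ii) then follows for free because $\tilde\beta_0$ remains the \emph{unique} lift of $\mathcal{B}$ separating $\tilde\beta_{-1}$ from $\tilde T_\beta^m\tilde\beta_1$, so any interfering lift is forced to equal $\tilde\beta_0$. Your ``buffer and discard'' heuristic is pointed at the right terms, but making condition (ii) rigorous that way is exactly the obstacle you flagged; the dual-tree setup sidesteps it.

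There is a concrete gap in your part \eqref{enum:pmn3}. From condition (ii) you correctly deduce that lifts of $\gamma$ \emph{which cross $\tilde\delta$} miss every $\tilde\beta_i$, but other lifts of $\gamma$ may well cross $\tilde\beta_i$; hence no single lift $\tilde\tau$ is the identity on neighborhoods of all the $\tilde\beta_i$. The paper avoids this by never touching the $\tilde\beta_i$: it introduces lifts $\tilde\alpha_{-1},\tilde\alpha_1$ of $\mathcal{A}$ adjacent to $\tilde\alpha$ along $\tilde\delta$; since these are disjoint from \emph{all} lifts of $\gamma$, the chosen $\tilde T_\gamma^m$ fixes them, so $\tilde T_\gamma^m\tilde\delta$ still crosses both, and then condition (ii) is checked using the uniqueness of $\tilde\alpha$ as the lift of $\mathcal{A}$ separating $\tilde\alpha_{-1}$ from $\tilde\alpha_1$. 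For part \eqref{enum:pmn1}, your distinct-arcs observation is fine, but ``combinatorial periodicity with period $\operatorname{lcm}(p,q)$'' is not enough on its own (arcs on $\beta$ can wrap arbitrarily); the paper's bound comes from a uniqueness lemma for contractible geodesic triangles sharing all three vertices, which is the missing ingredient you would need.
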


The proof of Theorem \ref{thm:pp_main} is on the page \pageref{pf:pp_main}.
This theorem gives a setting for ping-pong technique.
The original ping-pong lemma can be found in Koberda \cite{k12a, k12b}.

\begin{definition}
A reduced word $W = \gamma_n^{k_n} \dots \gamma_1^{k_1}$ of $A(\Gamma)$ is said to be \emph{central} if $\gamma_i \neq \gamma_j$ and $[\gamma_i, \gamma_j] = 1$ for all distinct $i, j \in \{1, \dots, n\}$.
And we say a reduced word $W$ of $A(\Gamma)$ is of $\emph{central form}$ if there exist central words $W_1$, \dots, $W_n$ such that $W = W_n \dots W_1$ and the last alphabet of $W_i$ does not commute with the last alphabet of $W_{i+1}$ for every $i \in \{1, \dots, n - 1\}$.
\end{definition}

\begin{proof}[Proof of Theorem \ref{thm:main}]
If $\Gamma$ is the join of some nonempty subgraphs $\Gamma_1$ and $\Gamma_2$, then $A(\Gamma) \cong A(\Gamma_1) \times A(\Gamma_2)$.
In this case, we can divide the proof of Theorem \ref{thm:main} into the cases for $\Gamma_1$ and $\Gamma_2$.
So we reduce this proof to the case that $\Gamma$ is not the join of two nonempty subgraphs.
In other words, we assume that each simple closed geodesic in $\mathcal{F}$ crosses another simple closed geodesic of $\mathcal{F}$.

Note that every nonidentity element of $A(\Gamma)$ admits a word of central form.
Choose $W = W_n \dots W_1 \in A(\Gamma)$ be a nonidentity reduced word of central form.
We will show that $\varphi(W)$ acts nontrivially on the set of simple closed geodesics.
For each $i = 1, \dots, n$, let $\mathrm{supp}(W_i)$ be the subset of $\mathcal{F}$ containing the alphabets of $W_i$.
Then $\mathrm{supp}(W_i)$ is a multicurve.
For each $i$, let $\gamma_i$ be the last alphabet of $W_i$.
And let $\gamma_{n+1}$ be an arbitrary simple closed geodesic in $\mathcal{F}$ crossing $\gamma_n$.
Set $\alpha := \gamma_2$.

Write $W_1 = \gamma_{1,j_1}^{k_{1,j_1}} \gamma_{1, j_1-1}^{k_{1, j_1-1}} \dots \gamma_{1,1}^{k_{1,1}} \gamma_1^{k_1}$.
Let $$N := \begin{cases} \Delta(\mathcal{F}) + 1 & \text{if } \Delta(\mathcal{F}) \geq 2, \text{ or} \\ 3 & \text{if } \Delta(\mathcal{F}) = 1. \end{cases}$$
Then we have
$$\varphi(\gamma_1^{k_1}) \alpha = T_{\gamma_1}^{mk_1} (\alpha) \in \operatorname{PP}_N(\mathrm{supp}(W_1), {\gamma_1}, \gamma_2)$$
by Theorem \ref{thm:pp_main}\eqref{enum:pmn2}.
Applying Theorem \ref{thm:pp_main}\eqref{enum:pmn3} repeatedly, $\varphi(\gamma_{1,j_1}^{k_{1,j_1}} \dots \gamma_{1,1}^{k_{1,1}})$ sends $\varphi(\gamma_1^{k_1})\alpha$ into $\operatorname{PP}_N(\mathrm{supp}(W_1), {\gamma_1}, \gamma_2 )$. Consequently, $\operatorname{PP}_N(\mathrm{supp}(W_1), {\gamma_1}, \gamma_2)$ contains $\varphi(W_1)\alpha$.

Suppose that the simple closed geodesic $\varphi(W_{l-1} W_{l-2} \dots W_1)\alpha$, denoted by $\alpha_{l-1}$, is contained in $\operatorname{PP}_N(\mathrm{supp}(W_{l-1}), {\gamma_{l-1}}, \gamma_{l})$ for some $l \in \{ 2, \dots, n \}$.
If we write $W_{l} = \gamma_{l,j_l}^{k_{l,j_l}} \dots \gamma_{l,1}^{k_{l,1}} \gamma_l^{k_l}$, then $$\varphi(\gamma_l^{k_l})\alpha_{l-1} \in \operatorname{PP}_N(\mathrm{supp}(W_l), {\gamma_l}, \gamma_{l+1})$$ by Theorem \ref{thm:pp_main}\eqref{enum:pmn2}.
And by Theorem \ref{thm:pp_main}\eqref{enum:pmn3}, we have
$$ \varphi(W_l)\alpha_{l-1} = \varphi(\gamma_{l,j_l}^{k_{l,j_l}} \dots \gamma_{l, 1}^{k_{l, 1}}) \varphi(\gamma_l^{k_l}) \alpha_{l - 1} \in \operatorname{PP}_N(\mathrm{supp}(W_l), {\gamma_l}, \gamma_{l+1}). $$
In conclusion, it holds that $\varphi(W) \alpha \in \operatorname{PP}_N(\mathrm{supp}(W_n), {\gamma_n}, \gamma_{n+1})$.

Since Theorem \ref{thm:pp_main}\eqref{enum:pmn1} implies that $\alpha$ does not belong to $\operatorname{PP}_N(\mathrm{supp}(W_n), {\gamma_n}, \gamma_{n+1})$, the action of $W$ on the set of simple closed geodesics is nontrivial.
Because $W$ is an arbitrary nonidentity reduced word of $A(\Gamma)$, the action of $A(\Gamma)$ on the set of simple closed geodesics is faithful.
Therefore, the surjective homomorphism $\varphi$ is injective, i.e., it is an isomorphism.
\end{proof}

\subsection{Guide to Readers}

Although we closely follow the original approach of Koberda \cite{k12b}, the material of this paper is essentially self-contained.
As we have just shown that Theorem \ref{thm:pp_main} implies Theorem \ref{thm:main}, we will from now only focus on the proof of the former.
Dual trees and the set $\operatorname{PP}_n$ are our main tools.
Section \ref{sec:act_G} is an introduction to the actions of fundamental groups of surfaces on dual trees.
Proposition \ref{prop:num_lift} is the main result of this section.
In Section \ref{sec:normalizer}, we study the action of lifts of Dehn twists on dual trees.
Proposition \ref{prop:dehn_cal} gives how we compute Dehn twists by fundamental groups of surfaces.
In Section \ref{sec:fpc}, we investigate the set $\operatorname{PP}_n$ and prove Theorem \ref{thm:pp_main}.
Proposition \ref{prop:half_ele} is the technical essence of our paper.

\subsection{Acknowledgement}
The author is supported by Samsung Science and Technology Foundation (SSTF-BA1301-51). I would like to thank my advisor Sang-hyun Kim for many suggestions regarding this work.
I am grateful to Thomas Koberda for writing the main inspiration~\cite{k12b} of this paper, and also for many encouraging comments. I appreciate Runnel for helpful discussions. 

\section{The Actions of Fundamental Groups of Surfaces on Dual Trees} \label{sec:act_G}

For disjoint subspaces $A$ and $B$ of $\mathbb{H}^2$, we say that a geodesic $L$ \emph{separates} $A$ from $B$ if $A$ and $B$ lie in different connected components of $\mathbb{H}^2 \setminus L$.

\begin{remark} \label{rem:sep}
For pairwise disjoint geodesics $L_1, L_2, L_3$ on $\mathbb{H}^2$, we will use the well-known facts.
\begin{enumerate}
\item Every geodesic crossing both $L_1$ and $L_3$ also crosses $L_2$ if and only if $L_2$ separates $L_1$ from $L_3$.
\item If some geodesic crosses $L_1$, $L_2$ and $L_3$, then some $L_i$ separates the others.
\end{enumerate}
\end{remark}

\begin{definition} \label{def:dual}
Let $\xi: \mathbb{H}^2 \to \Sigma$ be a covering map.
For a simple closed geodesic $\gamma$ on $\Sigma$, the \emph{dual tree} of $\gamma$ is a triple $(\mathcal{Y}_\gamma, d_\gamma, \sigma_\gamma)$ satisfying the following.
(We write $\mathcal{Y}_\gamma$ simply for the triple $(\mathcal{Y}_\gamma, d_\gamma, \sigma_\gamma)$.)
\begin{itemize}
\item $\mathcal{Y}_\gamma$ is a topological tree embedded into $\mathbb{H}^2$ satisfying that every connected component of $\mathbb{H}^2 \setminus \xi^{-1}(\gamma)$ contains exactly one vertex of $\mathcal{Y}_\gamma$ and for every lift $\tilde\gamma$ of $\gamma$, there is a unique edge $e$ on $\mathcal{Y}_\gamma$ such that $\lvert \mathcal{Y}_\gamma \cap \tilde\gamma \rvert = \lvert e \cap \tilde\gamma \rvert = \lvert e \cap \xi^{-1}(\gamma) \rvert = 1$.
(See Figure \ref{fig:emb_cub}.)
\item $d_\gamma$ is a metric on $\mathcal{Y}_\gamma$ satisfying that for each edge $e$ of $\mathcal{Y}_\gamma$, the length of $e$ is $1$ and the intersection between $e$ and a lift of $\gamma$ is the midpoint of $e$.
\item $\sigma_\gamma$ is an isometric action of $\pi_1(\Sigma)$ on $(\mathcal{Y}_\gamma, d_\gamma)$ satisfying that $\sigma_\gamma(g)(\tilde\gamma \cap \mathcal{Y}_\gamma) = (g\tilde\gamma) \cap \mathcal{Y}_\gamma$ for all $g \in \pi_1(\Sigma)$ and a lift $\tilde\gamma$ of $\gamma$.
\end{itemize}
\begin{figure}
\centering
\begingroup%
  \makeatletter%
  \providecommand\color[2][]{%
    \errmessage{(Inkscape) Color is used for the text in Inkscape, but the package 'color.sty' is not loaded}%
    \renewcommand\color[2][]{}%
  }%
  \providecommand\transparent[1]{%
    \errmessage{(Inkscape) Transparency is used (non-zero) for the text in Inkscape, but the package 'transparent.sty' is not loaded}%
    \renewcommand\transparent[1]{}%
  }%
  \providecommand\rotatebox[2]{#2}%
  \newcommand*\fsize{\dimexpr\f@size pt\relax}%
  \newcommand*\lineheight[1]{\fontsize{\fsize}{#1\fsize}\selectfont}%
  \ifx\svgwidth\undefined%
    \setlength{\unitlength}{144.94153481bp}%
    \ifx\svgscale\undefined%
      \relax%
    \else%
      \setlength{\unitlength}{\unitlength * \real{\svgscale}}%
    \fi%
  \else%
    \setlength{\unitlength}{\svgwidth}%
  \fi%
  \global\let\svgwidth\undefined%
  \global\let\svgscale\undefined%
  \makeatother%
  \begin{picture}(1,1.49226391)%
    \lineheight{1}%
    \setlength\tabcolsep{0pt}%
    \put(0,0){\includegraphics[width=\unitlength,page=1]{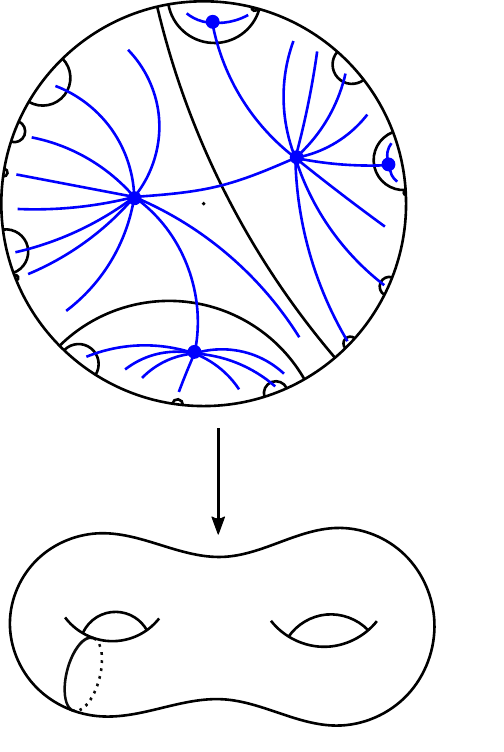}}%
    \put(0.47348161,0.52361624){\color[rgb]{0,0,0}\makebox(0,0)[lt]{\lineheight{1.25}\smash{\begin{tabular}[t]{l}$\xi$\end{tabular}}}}%
    \put(0.10813973,0.01139999){\color[rgb]{0,0,0}\makebox(0,0)[lt]{\lineheight{1.25}\smash{\begin{tabular}[t]{l}$\gamma$\end{tabular}}}}%
    \put(0.90154916,0.19800615){\color[rgb]{0,0,0}\makebox(0,0)[lt]{\lineheight{1.25}\smash{\begin{tabular}[t]{l}$\Sigma$\end{tabular}}}}%
    \put(0.78161017,0.79989856){\color[rgb]{0,0,0}\makebox(0,0)[lt]{\lineheight{1.25}\smash{\begin{tabular}[t]{l}$\mathbb{H}^2$\end{tabular}}}}%
  \end{picture}%
\endgroup%

\caption{The dual tree of a simple closed geodesic $\gamma$ \label{fig:emb_cub}}
\end{figure}
\end{definition}

Note that if $p \in \mathcal{Y}_\gamma$ is not the midpoint of an edge, then for every $g \in \pi_1(\Sigma)$, two points $\sigma_\gamma(g)p$ (the isometric action on $(\mathcal{Y}_\gamma, d_\gamma)$) and $gp$ (the isometric action on $(\mathbb{H}^2, d_{\mathbb{H}^2})$) can be different but they are always contained in the same connected component of $\mathbb{H}^2 \setminus \xi^{-1}(\gamma)$.
Note that an element of $\pi_1(\Sigma)$ preserving a lift of $\gamma$ fixes an edge of $\mathcal{Y}_\gamma$.

For a simple closed geodesic $\gamma$ and its lift $\tilde\gamma$, we write $\mathcal{N}_r(\gamma)$ and $\mathcal{N}_r(\tilde\gamma)$ for the open $r$-neighborhoods of $\gamma$ and $\tilde\gamma$, respectively.
And we write $\overline{\mathcal{N}_r(\gamma)}$ and $\overline{\mathcal{N}_r(\tilde\gamma)}$ for the closure of $\mathcal{N}_r(\gamma)$ and $\mathcal{N}_r(\tilde\gamma)$, respectively.
The collar lemma (\emph{cf.} \cite{b78}) means that for every simple closed geodesic $\gamma$ on $\Sigma$, there is a positive number $R$, depending only on the length of $\gamma$ such that $\overline{\mathcal{N}_r(\gamma)}$ is homeomorphic to an annulus for every $0 < r < R$.

\begin{proposition} \label{prop:hyp_tree}
Let $\gamma$ be a simple closed geodesic on $\Sigma$.
Let $R$ be a positive number such that $\overline{\mathcal{N}_r(\gamma)}$ is homeomorphic to an annulus for every $0 < r < R$.
Then for every $0 < r < R$, there is a $\pi_1(\Sigma)$-equivariant surjective continuous map $\Phi_{\gamma, r}: \mathbb{H}^2 \to \mathcal{Y}_\gamma$ such that the following hold.
\begin{enumerate}
\item \label{enum:sur1} For each lift $\tilde\gamma$ of $\gamma$, if $e(\tilde\gamma)$ is the edge intersecting $\tilde\gamma$ and $u$ is a point in the interior of $e(\tilde\gamma)$, then $\Phi_{\gamma, r}^{-1}(u)$ is a one-dimensional subspace of $\mathcal{N}_r(\tilde\gamma)$.
\item \label{enum:sur2} If $\alpha$ is a simple closed geodesic on $\Sigma$, then there is $0 < R' \leq R$ such that for every $0 < r < R'$ and a lift $\tilde\alpha$ of $\alpha$, the image $\Phi_{\gamma, r}(\tilde\alpha)$ is given as follows:
\begin{enumerate}
\item the midpoint of an edge if $\alpha = \gamma$;
\item a vertex if $\alpha \cap \gamma = \emptyset$;
\item a geodesic with respect to $d_\gamma$ if $\alpha \pitchfork \gamma$.
\end{enumerate}
\end{enumerate}
\end{proposition}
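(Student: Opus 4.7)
The plan is to build $\Phi_{\gamma, r}$ piecewise, exploiting the annular decomposition forced by the hypothesis. Because $\overline{\mathcal{N}_r(\gamma)}$ is an embedded annulus in $\Sigma$, the open neighborhoods $\mathcal{N}_r(\tilde\gamma)$ of distinct lifts of $\gamma$ are pairwise disjoint in $\mathbb{H}^2$: an overlap would project to a self-intersection of the annulus. Each connected component of $\mathbb{H}^2 \setminus \xi^{-1}(\gamma)$ contains a unique vertex of $\mathcal{Y}_\gamma$, and each lift $\tilde\gamma$ is crossed by a unique edge $e(\tilde\gamma)$ whose two endpoints are the vertices of the two components that $\tilde\gamma$ bounds.

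On $\mathbb{H}^2 \setminus \bigcup_{\tilde\gamma} \mathcal{N}_r(\tilde\gamma)$, define $\Phi_{\gamma, r}$ to send each point to the unique vertex in its ambient component. On each strip $\mathcal{N}_r(\tilde\gamma)$, use the signed distance $s$ from $\tilde\gamma$, with the sign determined by the natural bijection between the two sides of $\tilde\gamma$ and the two components bounding $\tilde\gamma$, and send a point at signed distance $s \in (-r, r)$ to the point of $e(\tilde\gamma)$ at signed distance $s/(2r)$ from the midpoint. The two pieces agree along $\partial \mathcal{N}_r(\tilde\gamma)$, because a boundary point on a given side is sent to the vertex of $e(\tilde\gamma)$ on that side by both formulas. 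Continuity is then built in, surjectivity follows by sweeping through every edge and vertex, and $\pi_1(\Sigma)$-equivariance follows because the group action permutes lifts, neighborhoods and components in a way compatible with $\sigma_\gamma$. Property (1) falls out immediately: for $u$ in the interior of $e(\tilde\gamma)$, the preimage $\Phi_{\gamma, r}^{-1}(u)$ consists of precisely the points of $\mathcal{N}_r(\tilde\gamma)$ at a fixed signed distance from $\tilde\gamma$, that is, a single equidistant arc, which is one-dimensional.

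Property (2a) holds by construction, since $\tilde\gamma$ has signed distance zero from itself and is therefore sent to the midpoint of $e(\tilde\gamma)$. For (2b), $\alpha$ and $\gamma$ are distinct disjoint simple closed geodesics on $\Sigma$, so the geodesic distance in $\Sigma$ between them is positive; picking $R' \le \min\{R, d(\alpha, \gamma)\}$ ensures that for $r < R'$ no lift of $\alpha$ intersects any $\mathcal{N}_r(\tilde\gamma)$, so every lift $\tilde\alpha$ lies inside one component of $\mathbb{H}^2 \setminus \xi^{-1}(\gamma)$ and maps to that component's vertex.

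The main obstacle is (2c), where finer control on the way $\tilde\alpha$ enters and exits the neighborhoods of lifts it crosses is required. By compactness of $\alpha$ and transversality of its crossings with $\gamma$, for all sufficiently small $r$ the intersection $\alpha \cap \overline{\mathcal{N}_r(\gamma)}$ is a disjoint union of $i(\alpha, \gamma)$ arcs, one around each crossing and each passing cleanly from one boundary circle of the annulus to the other; the finitely many $r$ for which $\alpha$ is tangent to $\partial \mathcal{N}_r(\gamma)$ are discarded when we choose $R'$. Lifting, any lift $\tilde\alpha$ meets each $\mathcal{N}_r(\tilde\gamma_i)$, for $\tilde\gamma_i$ a lift it crosses, in a single arc crossing cleanly from one side of $\tilde\gamma_i$ to the other, and meets no other lift's neighborhood. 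Consequently $\Phi_{\gamma, r}(\tilde\alpha)$ alternates between whole edges $e(\tilde\gamma_i)$, each swept once by the transversal arc, and single vertices coming from the pieces of $\tilde\alpha$ outside the neighborhoods. To conclude that this path is a bi-infinite geodesic in $\mathcal{Y}_\gamma$ it remains to rule out backtracking: consecutive edges $e(\tilde\gamma_i)$ and $e(\tilde\gamma_{i+1})$ share only the vertex of the component lying between $\tilde\gamma_i$ and $\tilde\gamma_{i+1}$, while the other endpoints of the two edges correspond to the distinct components on the far sides of these two lifts.
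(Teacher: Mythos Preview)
Your proof is correct and follows essentially the same approach as the paper: both construct $\Phi_{\gamma,r}$ piecewise by collapsing the complement of the annular neighborhoods to vertices and using a transverse coordinate on each strip to parametrize the corresponding edge, then verify (2) via the collar/distance argument for disjoint curves and a compactness argument for transverse ones. The only cosmetic differences are that you use the explicit signed hyperbolic distance where the paper uses an abstract annulus parametrization $\rho\colon S^1\times[0,1]\to\Sigma$, and you spell out the no-backtracking check in (2c) that the paper leaves implicit in the phrase ``a union of edges, which is a geodesic.''
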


\begin{proof}
\noindent\eqref{enum:sur1}
Fix a positive number $r < R$.
Let $\rho: S^1 \times [0, 1] \to \Sigma$ be a topological embedding satisfying that $\operatorname{im}\rho = \overline{\mathcal{N}_r(\gamma)}$ and $\rho(S^1 \times \{1/2\}) = \gamma$.
Let $\xi: \mathbb{H}^2 \to \Sigma$ be a covering map.
For each lift $\tilde\gamma$ of $\gamma$, let $\rho_{\tilde\gamma}$ be a lift of $\rho$ whose image contains $\tilde\gamma$ as a subset.
And let $\psi_{\tilde\gamma}: [0, 1] \to e(\tilde\gamma)$ be an isometry satisfying that $\psi_{\tilde\gamma}(0)$ and $\rho_{\tilde\gamma}(s, 0)$ are in the same connected component of $\mathbb{H}^2 \setminus \xi^{-1}(\gamma)$ for all $s \in S^1$.

Then for every $g \in \pi_1(\Sigma)$ and $t \in [0, 1]$, the point $\sigma_\gamma(g) \psi_{\tilde\gamma}(t)$ is on the edge $e(g\tilde\gamma)$.
So $\sigma_\gamma(g) \psi_{\tilde\gamma}$ is an isometry from $[0, 1]$ to $e(g\tilde\gamma)$.
Because $\sigma_\gamma(g) \psi_{\tilde\gamma}(0)$ and $g\psi_{\tilde\gamma}(0)$ are in the same connected component of $\mathbb{H}^2 \setminus \xi^{-1}(\gamma)$, we have $\sigma_\gamma(g) \psi_{\tilde\gamma} = \psi_{g\tilde\gamma}$.

With the map $\mathrm{pr}: (s, t) \mapsto t$, we define a map $\Phi_{\gamma, r}: \mathbb{H}^2 \to \mathcal{Y}_\gamma$ by
$$\Phi_{\gamma, r}(p) = \begin{cases} ( \psi_{\tilde\gamma} \circ \mathrm{pr} \circ \rho_{\tilde\gamma}^{-1} ) ( p) & \text{if } p \in \operatorname{im} \rho_{\tilde\gamma} \text{ for some lift } \tilde\gamma, \\ \text{the vertex lying in the same component}, & \text{otherwise.} \end{cases}$$
Then $\Phi_{\gamma, r}$ is well-defined, continuous and surjective, and satisfies the condition \eqref{enum:sur1}.

To show that $\Phi_{\gamma, r}$ is $\pi_1(\Sigma)$-equivariant, choose a point $q$ on $\mathbb{H}^2$ and $g \in \pi_1(\Sigma)$.
If $q$ is a point in $\overline{\mathcal{N}_r(\tilde\gamma)} = \operatorname{im}\rho_{\tilde\gamma}$ for some lift $\tilde\gamma$, then $gq \in \overline{\mathcal{N}_r(g\tilde\gamma)} = \operatorname{im}\rho_{g\tilde\gamma}$.
Because $\rho_{\tilde\gamma}$ and $\rho_{g\tilde\gamma}$ are lifts of $\rho$, we have $(\mathrm{pr} \circ \rho_{\tilde\gamma}^{-1})(q) = (\mathrm{pr} \circ \rho_{g\tilde\gamma}^{-1})(gq)$.
Then $\sigma_\gamma(g) \Phi_{\gamma, r}(q) = \sigma_\gamma(g) (\psi_{\tilde\gamma}(\mathrm{pr}\circ \rho_{\tilde\gamma}^{-1})(q)) = \psi_{g\tilde\gamma}(\mathrm{pr} \circ \rho_{g\tilde\gamma}^{-1}(gq)) = \Phi_{\gamma, r}(gq)$.

If $\Phi_{\gamma, r}(q)$ is a vertex $v$ on $\mathcal{Y}_\gamma$, then $q$ and $v$ are in the same connected component of $\mathbb{H}^2 \setminus \xi^{-1}(\gamma)$.
Since $gq$, $gv$ and $\sigma_\gamma(g) v$ are in the same connected component, we have $\sigma_\gamma(g) \Phi_{\gamma, r}(q) = \sigma_\gamma(g) v = \Phi_{\gamma, r}(gq)$.
Therefore, $\Phi_{\gamma, r}$ is $\pi_1(\Sigma)$-equivariant. \vspace{1.5mm}

\noindent\eqref{enum:sur2}
If $\alpha = \gamma$, then $\Phi_{\gamma, r}(\tilde\alpha)$ is the midpoint of an edge by definition.
If $\alpha$ is disjoint from $\gamma$, then $\alpha$ is also disjoint from $\mathcal{N}_r(\gamma)$ for all $0< r < R(\gamma)$ by the collar lemma.
So $\tilde\alpha$ is disjoint from the $r$-neighborhoods of all lifts of $\gamma$.
Therefore, $\Phi_{\gamma, r}(\tilde\alpha)$ is a vertex.

Assume that $\alpha$ crosses $\gamma$. 
Since $\alpha$ is compact, there is a positive number $R(\alpha, \gamma) > 0$ such that $\alpha \cap \overline{\mathcal{N}_r(\gamma)}$ is the disjoint union of line segments intersecting $\gamma$ for all $0 < r < R(\alpha, \gamma)$.
So $\tilde\alpha$ crosses $\tilde\gamma$ whenever a lift $\tilde\alpha$\ of $\alpha$ intersects the $r$-neighborhood of a lift $\tilde\gamma$ of $\gamma$.
Therefore, $\Phi_\gamma(\tilde\alpha)$ is the union of edges, which is a geodesic on $\mathcal{Y}_\gamma$.
\end{proof}

The \emph{translation length}  of an isometry $f$ on the dual tree $\mathcal{Y}_\gamma$ is
$$ \operatorname{tr}_\gamma f := \inf_{v \in \mathcal{Y}_\gamma} (v, fv). $$
For $g \in \pi_1(\Sigma)$, we write $\operatorname{tr}_\gamma g$ instead of $\operatorname{tr}_\gamma \sigma_\gamma(g)$.
An element of $\pi_1(\Sigma)$ is said to be \emph{primitive} if it cannot be written by a proper power of another element of $\pi_1(\Sigma)$.

\begin{lemma}
If $\gamma$ is a simple closed geodesic on $\Sigma$ and $h$ is a primitive element of $\pi_1(\Sigma)$ preserving a lift of a simple closed geodesic $\alpha$, then $\mathrm{tr}_\gamma\, h =i(\alpha, \gamma)$.
\end{lemma}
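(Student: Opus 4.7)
The strategy is to transport the question from $\mathcal{Y}_\gamma$ back to $\mathbb{H}^2$ via the equivariant map $\Phi := \Phi_{\gamma,r}$ of Proposition \ref{prop:hyp_tree}, for $r$ small enough that both parts of that proposition apply. Because $h$ preserves $\tilde\alpha$ and $\Phi$ is $\pi_1(\Sigma)$-equivariant, the image $\Phi(\tilde\alpha) \subset \mathcal{Y}_\gamma$ is $\sigma_\gamma(h)$-invariant. The proof then splits along the three cases enumerated in Proposition \ref{prop:hyp_tree}\eqref{enum:sur2}.

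If $\alpha = \gamma$, then $\Phi(\tilde\alpha)$ is the midpoint of a single edge, which $\sigma_\gamma(h)$ must fix, so $\operatorname{tr}_\gamma h = 0 = i(\gamma, \gamma)$. If $\alpha$ and $\gamma$ are disjoint, then $\Phi(\tilde\alpha)$ is a single vertex $v$, again fixed by $\sigma_\gamma(h)$, giving $\operatorname{tr}_\gamma h = 0 = i(\alpha, \gamma)$.

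The substantive case is $\alpha \pitchfork \gamma$, where $L := \Phi(\tilde\alpha)$ is a genuine geodesic in $\mathcal{Y}_\gamma$ preserved by $\sigma_\gamma(h)$. Since $h$ is primitive and preserves $\tilde\alpha$, it generates the full stabilizer of $\tilde\alpha$ in $\pi_1(\Sigma)$, so $\tilde\alpha \to \alpha$ factors through $\tilde\alpha/\langle h\rangle \xrightarrow{\cong} \alpha$. In particular, for any $p \in \tilde\alpha$, the segment of $\tilde\alpha$ from $p$ to $hp$ is a fundamental domain for $\langle h \rangle$ and projects bijectively onto $\alpha$ (apart from endpoints). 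The number of lifts of $\gamma$ that this segment crosses therefore equals $i(\alpha, \gamma)$, since each transverse intersection of $\alpha$ with $\gamma$ is hit exactly once.

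Now choose $p \in \tilde\alpha$ outside every $\overline{\mathcal{N}_r(\tilde\gamma')}$, so that $\Phi(p)$ is a vertex of $L$. By Proposition \ref{prop:hyp_tree}\eqref{enum:sur1}, as a point moves from $p$ to $hp$ along $\tilde\alpha$, its image under $\Phi$ moves monotonically along $L$, traversing exactly one edge of $\mathcal{Y}_\gamma$ for each lift of $\gamma$ crossed. Since each edge has length $1$, we obtain
\[
d_\gamma\bigl(\Phi(p), \Phi(hp)\bigr) = i(\alpha, \gamma).
\]
By equivariance $\Phi(hp) = \sigma_\gamma(h)\Phi(p)$, and since $\Phi(p)$ sits on the axis $L$ of $\sigma_\gamma(h)$, this distance is exactly $\operatorname{tr}_\gamma h$. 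Thus $\operatorname{tr}_\gamma h = i(\alpha, \gamma)$ in all cases. The only delicate point is confirming that $\sigma_\gamma(h)$ really translates along $L$ rather than inverting it — but inversion would have order $2$ on $\pi_0$-level, while $\pi_1(\Sigma)$ is torsion-free and $h$ has infinite order, ruling this out.
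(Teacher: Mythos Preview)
Your proof is correct and follows essentially the same approach as the paper's: use the equivariant map $\Phi_{\gamma,r}$ to carry $\tilde\alpha$ into $\mathcal{Y}_\gamma$, handle the $i(\alpha,\gamma)=0$ cases by observing the image is a point, and in the transverse case count the lifts of $\gamma$ crossed by a fundamental segment of $\tilde\alpha$ to get $d_\gamma(\Phi(p),\sigma_\gamma(h)\Phi(p))=i(\alpha,\gamma)$ at a vertex on the invariant line. The paper compresses your axis and no-inversion discussion into a one-line citation of Bass--Serre theory, whereas you spell it out; your monotonicity observation already suffices to rule out inversion, so the final torsion-freeness remark is not really needed.
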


\begin{proof}
Let $\tilde\alpha$ be the lift of $\alpha$ preserved by $h$.
If $i(\alpha, \gamma) = 0$, then $\Phi_{\gamma, r}(\tilde\alpha)$ is a point on $\mathcal{Y}_\gamma$ for some $r > 0$.
Because $h$ preserves $\Phi_{\gamma, r}(\tilde\alpha)$, we have $\operatorname{tr}_\gamma h = 0 = i(\alpha, \gamma)$.

Assume that $\alpha$ crosses $\gamma$.
By Proposition \ref{prop:hyp_tree}\eqref{enum:sur2}, it is satisfied that $h$ preserves the geodesic $\Phi_{\gamma, r}(\tilde\alpha)$ for some $r > 0$.
If $p$ is a point on $\tilde\alpha$ such that $\Phi_{\gamma, r}(p)$ is a vertex, then $\operatorname{tr}_\gamma h $ is equal to $d_\gamma(\Phi_\gamma(p), \sigma_\gamma(h) \Phi_\gamma(p))$ by Bass-Serre theory.
Since the geodesic segment joining $p$ and $h p$ crosses exactly $i(\alpha, \gamma)$ lifts of $\gamma$, we have $d_\gamma(\Phi_{\gamma, r}(p), \Phi_{\gamma, r}(h p)) = i(\alpha, \gamma)$.
Therefore, $\operatorname{tr}_\gamma h = i(\alpha, \gamma)$.
\end{proof}

\begin{lemma} \label{lem:tria}
Let $\alpha$, $\beta$ and $\gamma$ be simple closed geodesics on $\Sigma$ crossing each other.
If $x \in \alpha \cap \beta$, $y \in \beta \cap \gamma$ and $z \in \gamma \cap \alpha$, then the number of distinct contractible geodesic triangles whose vertices are $x$, $y$ and $z$ is at most $1$.
\end{lemma}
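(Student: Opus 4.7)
The plan is to argue by contradiction: assume two distinct contractible geodesic triangles $T_1, T_2$ with vertices $x, y, z$ exist, and lift the configuration to $\mathbb{H}^2$ to extract an algebraic relation in $\pi_1(\Sigma)$ which will be incompatible with the geometry of three pairwise crossing simple closed geodesics.

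First, fix a lift $\tilde{x}$ of $x$; this determines the lifts $\tilde{\alpha}, \tilde{\beta}$ of $\alpha, \beta$ through $\tilde{x}$. Each $T_i$ lifts to a geodesic triangle $\tilde{T}_i \subset \mathbb{H}^2$ with $\tilde{x}$ as a vertex, and with remaining vertices $\tilde{y}_i \in \tilde{\beta}$ and $\tilde{z}_i \in \tilde{\alpha}$ that are lifts of $y$ and $z$, lying on a common lift $\tilde{\gamma}_i$ of $\gamma$. I would then observe that if any one side of $T_1$ and $T_2$ coincides as an arc in $\Sigma$, the corresponding endpoint lifts agree in $\mathbb{H}^2$, and then simplicity of $\gamma$ (a lift is determined by any two of its points) forces $\tilde{\gamma}_1 = \tilde{\gamma}_2$ and hence $T_1 = T_2$. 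So if $T_1 \neq T_2$, then $\tilde{\gamma}_1 \neq \tilde{\gamma}_2$, and by simplicity of $\gamma$ the two lifts are disjoint in $\mathbb{H}^2$.

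Let $a, b, c$ be the primitive hyperbolic elements stabilizing $\tilde{\alpha}, \tilde{\beta}, \tilde{\gamma}_1$ respectively. Since $\tilde{z}_1, \tilde{z}_2$ are both lifts of $z$ on $\tilde{\alpha}$, I have $\tilde{z}_2 = a^{k} \tilde{z}_1$ for some nonzero integer $k$, and similarly $\tilde{y}_2 = b^{l} \tilde{y}_1$ for some nonzero $l$. The element $b^{-l} a^{k}$ sends $\tilde{z}_1 \in \tilde{\gamma}_1$ to $b^{-l} \tilde{z}_2 \in b^{-l} \tilde{\gamma}_2 = \tilde{\gamma}_1$, so it preserves $\tilde{\gamma}_1$, and therefore equals $c^{m}$ for some integer $m$. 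One checks $m \neq 0$, as $m = 0$ would give $a^{k} = b^{l}$ and thus $\tilde{\alpha} = \tilde{\beta}$. So we arrive at the relation $a^{k} = b^{l} c^{m}$ in $\pi_1(\Sigma)$ with $k, l, m$ all nonzero.

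The remaining step, which I expect to be the technical crux, is to rule out such a relation. The plan is to project to $\Sigma$ the quadrilateral $Q \subset \mathbb{H}^2$ bounded by the arcs $[\tilde{z}_1, \tilde{z}_2]_{\tilde{\alpha}}$, $[\tilde{y}_1, \tilde{y}_2]_{\tilde{\beta}}$, and the two $\gamma$-sides of $\tilde{T}_1, \tilde{T}_2$ on $\tilde{\gamma}_1, \tilde{\gamma}_2$. The projection is an immersed disk whose boundary is a null-homotopic loop containing the sub-loops $\alpha^{\pm k}$ based at $z$ and $\beta^{\pm l}$ based at $y$, joined by two arcs of $\gamma$. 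A bigon-reduction argument --- exploiting that pairwise simple closed geodesics are in minimum position and that two non-commuting elements corresponding to transversely intersecting simple closed curves generate a free subgroup of the torsion-free surface group $\pi_1(\Sigma)$ --- should then produce either an honest bigon between two of $\alpha, \beta, \gamma$ or equivalently a forbidden free-group identity, yielding the required contradiction.
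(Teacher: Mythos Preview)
Your reduction is correct up to the relation $a^k = b^l c^m$ with $k,l,m$ all nonzero: fixing the lift $\tilde x$, the argument that $T_1 \ne T_2$ forces $\tilde\gamma_1 \ne \tilde\gamma_2$, hence $k,l \ne 0$, and that $b^{-l}a^k$ stabilises $\tilde\gamma_1$ (so equals $c^m$ with $m\ne 0$) is all sound.

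The gap is the final paragraph. You do not actually prove that $a^k = b^l c^m$ is impossible; you only record a hope. The free-subgroup observation is not enough: knowing that $\langle a,b\rangle$ (or any pair among $a,b,c$) is free of rank two only tells you $c^m = b^{-l}a^k \in \langle a,b\rangle$, which is no contradiction---$c$ itself need not lie in $\langle a,b\rangle$, and $b^{-l}a^k$ is a perfectly good (indeed primitive) element of $F_2$. Nor does the projected quadrilateral help directly: its boundary is not a simple loop (it wraps $|k|$ times around $\alpha$ and $|l|$ times around $\beta$), so the usual bigon criterion for curves in minimal position does not apply. To push your approach through you would have to feed the specific incidence data (that $\tilde\gamma_1$ meets $\tilde\alpha$ at $\tilde z_1$ and $\tilde\beta$ at $\tilde y_1$, forming an honest triangle with $\tilde x$) back into a geometric or combinatorial argument, and nothing in your sketch does this. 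As written, the proposal is an incomplete reduction rather than a proof.

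The paper's argument is entirely different and much shorter: it never passes to $\mathbb{H}^2$. One assumes two disks $D_1,D_2\subset\Sigma$ bounded by the two geodesic triangles and case-splits on how many sides $\partial D_1$ and $\partial D_2$ share. If they share none, $D_1\cup D_2$ is homotopic to a pair of pants whose three boundary circles are $\alpha,\beta,\gamma$, forcing these to be pairwise disjoint; if they share exactly two sides, $D_1\cup D_2$ is a disk with boundary one of $\alpha,\beta,\gamma$, forcing that curve to be inessential; if they share one side together with the opposite vertex, $D_1\cup D_2$ is a cylinder, forcing two of the curves to be homotopic. Each case contradicts the hypothesis that $\alpha,\beta,\gamma$ are essential and pairwise crossing, so all three sides coincide and $D_1=D_2$.
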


\begin{proof}
\begin{figure}
\centering
\subfloat[][]{
\begingroup%
  \makeatletter%
  \providecommand\color[2][]{%
    \errmessage{(Inkscape) Color is used for the text in Inkscape, but the package 'color.sty' is not loaded}%
    \renewcommand\color[2][]{}%
  }%
  \providecommand\transparent[1]{%
    \errmessage{(Inkscape) Transparency is used (non-zero) for the text in Inkscape, but the package 'transparent.sty' is not loaded}%
    \renewcommand\transparent[1]{}%
  }%
  \providecommand\rotatebox[2]{#2}%
  \newcommand*\fsize{\dimexpr\f@size pt\relax}%
  \newcommand*\lineheight[1]{\fontsize{\fsize}{#1\fsize}\selectfont}%
  \ifx\svgwidth\undefined%
    \setlength{\unitlength}{117.29532677bp}%
    \ifx\svgscale\undefined%
      \relax%
    \else%
      \setlength{\unitlength}{\unitlength * \real{\svgscale}}%
    \fi%
  \else%
    \setlength{\unitlength}{\svgwidth}%
  \fi%
  \global\let\svgwidth\undefined%
  \global\let\svgscale\undefined%
  \makeatother%
  \begin{picture}(1,0.67761407)%
    \lineheight{1}%
    \setlength\tabcolsep{0pt}%
    \put(0,0){\includegraphics[width=\unitlength,page=1]{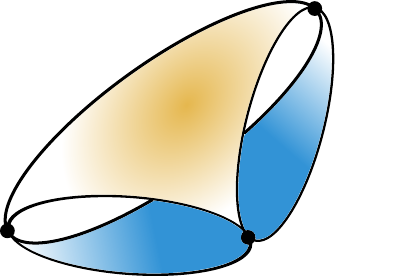}}%
    \put(0.27164298,0.31120437){\color[rgb]{0,0,0}\makebox(0,0)[lt]{\lineheight{1.25}\smash{\begin{tabular}[t]{l}$D_1$\end{tabular}}}}%
    \put(0.81876883,0.22066503){\color[rgb]{0,0,0}\makebox(0,0)[lt]{\lineheight{1.25}\smash{\begin{tabular}[t]{l}$D_2$\end{tabular}}}}%
    \put(0,0){\includegraphics[width=\unitlength,page=2]{two_tria.pdf}}%
  \end{picture}%
\endgroup%
 \label{subfig:two1}}
\subfloat[][]{
\begingroup%
  \makeatletter%
  \providecommand\color[2][]{%
    \errmessage{(Inkscape) Color is used for the text in Inkscape, but the package 'color.sty' is not loaded}%
    \renewcommand\color[2][]{}%
  }%
  \providecommand\transparent[1]{%
    \errmessage{(Inkscape) Transparency is used (non-zero) for the text in Inkscape, but the package 'transparent.sty' is not loaded}%
    \renewcommand\transparent[1]{}%
  }%
  \providecommand\rotatebox[2]{#2}%
  \newcommand*\fsize{\dimexpr\f@size pt\relax}%
  \newcommand*\lineheight[1]{\fontsize{\fsize}{#1\fsize}\selectfont}%
  \ifx\svgwidth\undefined%
    \setlength{\unitlength}{131.24674159bp}%
    \ifx\svgscale\undefined%
      \relax%
    \else%
      \setlength{\unitlength}{\unitlength * \real{\svgscale}}%
    \fi%
  \else%
    \setlength{\unitlength}{\svgwidth}%
  \fi%
  \global\let\svgwidth\undefined%
  \global\let\svgscale\undefined%
  \makeatother%
  \begin{picture}(1,0.57489846)%
    \lineheight{1}%
    \setlength\tabcolsep{0pt}%
    \put(0,0){\includegraphics[width=\unitlength,page=1]{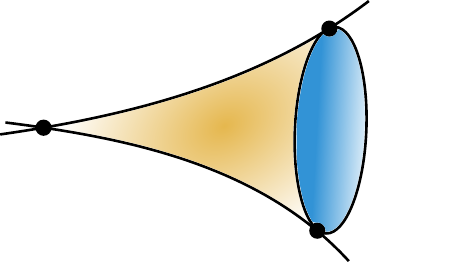}}%
    \put(0.45914123,0.26117036){\color[rgb]{0,0,0}\makebox(0,0)[lt]{\lineheight{1.25}\smash{\begin{tabular}[t]{l}$D_1$\end{tabular}}}}%
    \put(0.82225059,0.26016838){\color[rgb]{0,0,0}\makebox(0,0)[lt]{\lineheight{1.25}\smash{\begin{tabular}[t]{l}$D_2$\end{tabular}}}}%
  \end{picture}%
\endgroup%
 \label{subfig:two2}}
\subfloat[][]{
\begingroup%
  \makeatletter%
  \providecommand\color[2][]{%
    \errmessage{(Inkscape) Color is used for the text in Inkscape, but the package 'color.sty' is not loaded}%
    \renewcommand\color[2][]{}%
  }%
  \providecommand\transparent[1]{%
    \errmessage{(Inkscape) Transparency is used (non-zero) for the text in Inkscape, but the package 'transparent.sty' is not loaded}%
    \renewcommand\transparent[1]{}%
  }%
  \providecommand\rotatebox[2]{#2}%
  \newcommand*\fsize{\dimexpr\f@size pt\relax}%
  \newcommand*\lineheight[1]{\fontsize{\fsize}{#1\fsize}\selectfont}%
  \ifx\svgwidth\undefined%
    \setlength{\unitlength}{89.82268326bp}%
    \ifx\svgscale\undefined%
      \relax%
    \else%
      \setlength{\unitlength}{\unitlength * \real{\svgscale}}%
    \fi%
  \else%
    \setlength{\unitlength}{\svgwidth}%
  \fi%
  \global\let\svgwidth\undefined%
  \global\let\svgscale\undefined%
  \makeatother%
  \begin{picture}(1,0.96780763)%
    \lineheight{1}%
    \setlength\tabcolsep{0pt}%
    \put(0,0){\includegraphics[width=\unitlength,page=1]{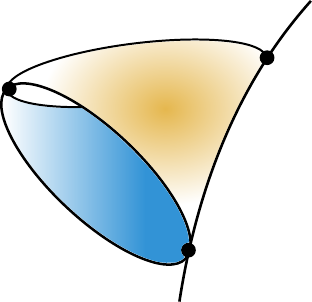}}%
    \put(0.44065018,0.63385042){\color[rgb]{0,0,0}\makebox(0,0)[lt]{\lineheight{1.25}\smash{\begin{tabular}[t]{l}$D_1$\end{tabular}}}}%
    \put(0.14868948,0.4187626){\color[rgb]{0,0,0}\makebox(0,0)[lt]{\lineheight{1.25}\smash{\begin{tabular}[t]{l}$D_2$\end{tabular}}}}%
  \end{picture}%
\endgroup%
 \label{subfig:two3}}
\caption{\label{fig:two_tria}}
\end{figure}
For contradiction, suppose that there are distinct disks $D_1$ and $D_2$ whose boundaries are geodesic triangles such that $x$, $y$ and $z$ are vertices of $\partial D_i$ for all $i = 1, 2$.
Write $\partial D_i$ for the boundary of $D_i$.

If $\partial D_1 \cap \partial D_2$ does not contain a side of $\partial D_1$ or $\partial D_2$ (\emph{cf.} Figure \ref{subfig:two1}), then $D_1 \cup D_2$ is homotopic to a pair of pants.
It implies that $\alpha$, $\beta$ and $\gamma$ are pairwise disjoint, which is a contradiction.

Assume that $\partial D_1$ and $\partial D_2$ share exactly two sides. (\emph{cf.} Figure \ref{subfig:two2})
Then $D_1 \cup D_2$ is a disk and its boundary is one of $\alpha$, $\beta$ or $\gamma$.
It induces that one of $\alpha$, $\beta$ and $\gamma$ is contractible to a point, which is a contradiction.

If $\partial D_1 \cap \partial D_2$ is the disjoint union of a vertex and a side (\emph{cf.} Figure \ref{subfig:two3}), then $D_1 \cup D_2$ is homotopic to a cylinder.
So two of $\alpha$, $\beta$ and $\gamma$ are homotopic to each other, which is a contradiction.
Therefore, $\partial D_1$ and $\partial D_2$ share all sides, which implies that $D_1 = D_2$.
\end{proof}

\begin{definition}
If $\gamma$ is a simple closed geodesic on $\Sigma$ and $L_1$ and $L_2$ are geodesics on $\mathbb{H}^2$, then we write $\Delta_\gamma(L_1, L_2)$ for the number of lifts of $\gamma$ crossing both $L_1$ and $L_2$.
\end{definition}

\begin{remark}
Let $\alpha$, $\beta$ and $\gamma$ be simple closed geodesics on $\Sigma$.
For sufficiently small $r > 0$, a lift $\tilde\alpha$ of $\alpha$ crosses a lift $\tilde\gamma$ of $\gamma$ if and only if the edge containing the midpoint $\Phi_{\gamma, r}(\tilde\gamma)$ belongs to $\Phi_{\gamma, r}(\tilde\alpha)$.
For a lift $\tilde\beta$ of $\beta$, the number $\Delta_\gamma(\tilde\alpha, \tilde\beta)$ is equal to the length of $\Phi_{\gamma, r}(\tilde\alpha) \cap \Phi_{\gamma, r}(\tilde\beta)$.
\end{remark}

\begin{proposition} \label{prop:num_lift}
Let $\alpha$, $\beta$ and $\gamma$ be simple closed geodesics on $\Sigma$.
And let $\tilde\alpha$ and $\tilde\beta$ be lifts of $\alpha$ and $\beta$, respectively.
\begin{enumerate}
\item \label{enum:dis_num}
If $\tilde\alpha$ is disjoint from $\tilde\beta$, then $\Delta_\gamma(\tilde\alpha, \tilde\beta) \leq \min\{ i(\alpha, \gamma), i(\beta, \gamma) \}$.
\item \label{enum:cro_num}
If $\tilde\alpha$ crosses $\tilde\beta$, then $\Delta_\gamma(\tilde\alpha, \tilde\beta) \leq \operatorname{lcm}\{ i(\alpha, \gamma), i(\beta, \gamma) \}$.
\end{enumerate}
\end{proposition}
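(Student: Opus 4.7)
The plan is to translate both inequalities into an analysis of the intersection of two axes in the dual tree $\mathcal{Y}_\gamma$. If either $\alpha$ or $\beta$ is disjoint from $\gamma$, then by Proposition \ref{prop:hyp_tree}\eqref{enum:sur2} the image $\Phi_{\gamma,r}(\tilde\alpha)$ or $\Phi_{\gamma,r}(\tilde\beta)$ is a single vertex, hence $\Delta_\gamma(\tilde\alpha, \tilde\beta) = 0$ and both claims are trivial. So I assume both $\alpha$ and $\beta$ cross $\gamma$, in which case $A := \Phi_{\gamma,r}(\tilde\alpha)$ and $B := \Phi_{\gamma,r}(\tilde\beta)$ are bi-infinite geodesic lines of $\mathcal{Y}_\gamma$ and, by the remark preceding the proposition, $\Delta_\gamma(\tilde\alpha, \tilde\beta)$ equals the edge length of $A \cap B$. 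Choosing primitive generators $h_\alpha, h_\beta$ of the $\pi_1(\Sigma)$-stabilizers of $\tilde\alpha, \tilde\beta$, the preceding lemma gives that they act on $\mathcal{Y}_\gamma$ as hyperbolic isometries with axes $A, B$ and translation lengths $a := i(\alpha,\gamma)$ and $b := i(\beta,\gamma)$ respectively.

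For Part \eqref{enum:dis_num} I would combine a pigeonhole along $A$ with the disjointness of $\tilde\alpha, \tilde\beta$. Set $N := \Delta_\gamma(\tilde\alpha, \tilde\beta)$ and suppose $N > a$. Since $\langle h_\alpha \rangle$ acts on the edges of $A$ with $a$ orbits, two distinct edges $e_1, e_2$ on $A \cap B$ satisfy $e_2 = h_\alpha^n e_1$ for some $n \neq 0$, so the corresponding lifts $\tilde\gamma_1, \tilde\gamma_2 = h_\alpha^n \tilde\gamma_1$ of $\gamma$ each cross both $\tilde\alpha$ and $\tilde\beta$. Applying $h_\alpha^{-n}$ to the crossing of $\tilde\gamma_2$ with $\tilde\beta$, the lift $\tilde\gamma_1$ crosses both $\tilde\beta$ and $h_\alpha^{-n}\tilde\beta$. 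Because $h_\alpha$ preserves each side of $\tilde\alpha$ setwise and $\tilde\alpha \cap \tilde\beta = \emptyset$, the two lifts of $\beta$ lie on the same side of $\tilde\alpha$. On that half-plane, I would use Remark \ref{rem:sep} to exhibit a lift of $\gamma$ separating $\tilde\beta$ from $h_\alpha^{-n}\tilde\beta$, which then has to cross $\tilde\gamma_1$, contradicting simplicity of $\gamma$. Swapping $\alpha$ and $\beta$ yields $N \leq b$ as well.

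For Part \eqref{enum:cro_num} I would use the $\operatorname{lcm}$ together with a commutator-like element. Put $L := \operatorname{lcm}\{a, b\}$ and suppose $N > L$. After replacing $h_\beta$ by $h_\beta^{-1}$ if necessary, $h_\alpha^{L/a}$ and $h_\beta^{L/b}$ both translate the segment $A \cap B$ by exactly $L$ in the same direction; the element $g := h_\alpha^{L/a} (h_\beta^{L/b})^{-1}$ therefore fixes a sub-segment of $A \cap B$ of edge length at least $N - L$. Using that $\tilde\alpha$ crosses $\tilde\beta$ to sharpen the pigeonhole so that this sub-segment contains two distinct edges, $g$ lies in the intersection of the $\pi_1(\Sigma)$-stabilizers of two distinct edges of $\mathcal{Y}_\gamma$. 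Those stabilizers are cyclic, generated by primitive elements preserving two distinct lifts of $\gamma$, so they have distinct axes in $\mathbb{H}^2$ and intersect trivially; hence $g = 1$ and $h_\alpha^{L/a} = h_\beta^{L/b}$. Two nontrivial elements of $\pi_1(\Sigma)$ with a common nontrivial power share an axis in $\mathbb{H}^2$, so $\tilde\alpha = \tilde\beta$, contradicting that they cross.

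The main obstacle I anticipate is in Part \eqref{enum:dis_num}: constructing the separating lift of $\gamma$ between $\tilde\beta$ and $h_\alpha^{-n}\tilde\beta$ and verifying it crosses $\tilde\gamma_1$ relies on a careful planar analysis via Remark \ref{rem:sep}. In Part \eqref{enum:cro_num} the subtle point is arranging orientations so that $h_\alpha^{L/a}$ and $h_\beta^{L/b}$ translate in the same direction along $A \cap B$; this is exactly where the crossing hypothesis $\tilde\alpha \pitchfork \tilde\beta$ is needed and is what separates the $\operatorname{lcm}$ bound from the $\min$ bound in the disjoint case.
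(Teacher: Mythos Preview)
Your reduction to the dual tree $\mathcal{Y}_\gamma$ and the identification of $\Delta_\gamma(\tilde\alpha,\tilde\beta)$ with the edge-length of $A\cap B$ match the paper exactly, and your use of the primitive stabilizers $h_\alpha,h_\beta$ with translation lengths $a,b$ is the right framework. The gaps are in the two endgame contradictions.

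\textbf{Part \eqref{enum:dis_num}.} Your pigeonhole correctly produces a lift $\tilde\gamma_1$ of $\gamma$ crossing all three of the pairwise disjoint geodesics $\tilde\alpha$, $\tilde\beta$, $h_\alpha^{-n}\tilde\beta$. But the contradiction you propose does not exist: Remark~\ref{rem:sep} does not manufacture lifts of $\gamma$; it only says that among three given pairwise disjoint geodesics crossed by a common transversal, one separates the other two. There is no reason whatsoever for some \emph{other} lift of $\gamma$ to separate $\tilde\beta$ from $h_\alpha^{-n}\tilde\beta$. The paper's contradiction goes the opposite way: taking $n=1$, it argues that \emph{none} of $\tilde\alpha,\tilde\beta,h_\alpha\tilde\beta$ separates the other two (using that $h_\alpha$ is an orientation-preserving isometry with axis $\tilde\alpha$), so by Remark~\ref{rem:sep}(2) no geodesic can cross all three, hence $A\cap B\cap h_\alpha B$ contains no edge and $|A\cap B|\le a$. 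Your argument needs this separation analysis, not an auxiliary lift of $\gamma$.

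\textbf{Part \eqref{enum:cro_num}.} Your commutator idea is natural but has exactly the off-by-one problem you flag, and I do not see how the crossing hypothesis ``sharpens'' it. Assuming $N\ge L+1$, the element $g=h_\alpha^{L/a}h_\beta^{-L/b}$ fixes a segment of $A\cap B$ of length $N-L\ge 1$, i.e.\ at least \emph{one} edge. From one fixed edge you only get $g=h_{\tilde\gamma_0}^{k}$ for some primitive stabilizer of a lift $\tilde\gamma_0$ and some $k\in\mathbb{Z}$; nothing forces $k=0$, so you cannot conclude $h_\alpha^{L/a}=h_\beta^{L/b}$. Your argument as written only yields $\Delta_\gamma(\tilde\alpha,\tilde\beta)\le L+1$. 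The paper closes precisely this gap with a different device: it notes that $h_\alpha^{L/a}\tilde\gamma_0=h_\beta^{L/b}\tilde\gamma_0$ (equivalent to your $g$ stabilizing $\tilde\gamma_0$), so the geodesic triangles contained in $\tilde\alpha\cup\tilde\beta\cup\tilde\gamma_0$ and in $\tilde\alpha\cup\tilde\beta\cup h_\alpha^{L/a}\tilde\gamma_0$ project to two \emph{distinct} contractible triangles on $\Sigma$ sharing all three vertices, contradicting Lemma~\ref{lem:tria}. That topological lemma is what absorbs the last unit; without it or a substitute, your approach stops one short.
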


\begin{proof}
\noindent\eqref{enum:dis_num}
Let $r > 0$ be sufficiently small, and let $\Phi_\gamma = \Phi_{\gamma, r}: \mathbb{H}^2 \to \mathcal{Y}_\gamma$ be a $\pi_1(\Sigma)$-equivariant surjective continuous map satisfying Proposition \ref{prop:hyp_tree}.
Suppose that either $i(\alpha, \gamma)$ or $i(\beta, \gamma)$ is zero.
Then $\Phi_\gamma(\tilde\alpha) \cap \Phi_\gamma(\tilde\beta)$ has at most one element.
So $\Delta_\gamma(\tilde\alpha, \tilde\beta) = 0$.

Assume that both $i(\alpha, \gamma)$ and $i(\beta, \gamma)$ are positive.
Let $h$ be a primitive element of $\pi_1(\Sigma)$ preserving $\tilde\alpha$.
Because $h$ is orientation-preserving, $\tilde\alpha$ does not separate $\tilde\beta$ from $h \tilde\beta$.
Since $h$ is an isometry, $h\tilde\beta$ does not separate $\tilde\alpha$ from $\tilde\beta$.
Likewise, $\tilde\beta$ does not separate $\tilde\alpha$ from $h \tilde\beta$.
By Remark \ref{rem:sep}, for every lift of $\gamma$, it is disjoint from some of $\tilde\alpha$, $\tilde\beta$ and $h \tilde\beta$.

Then $\Phi_\gamma(\tilde\alpha) \cap \Phi_\gamma(\tilde\beta) \cap \Phi_\gamma(h\tilde\beta)$ is a vertex or is empty.
So the intersection between $\Phi_\gamma(\tilde\alpha) \cap \Phi_\gamma(\tilde\beta)$ and $\Phi_\gamma(\tilde\alpha) \cap \Phi_\gamma(h\tilde\beta) = \sigma_\gamma(h)(\Phi_\gamma(\tilde\alpha) \cap \Phi_\gamma(\tilde\beta))$ does not contain an edge.
Similarly, $\Phi_\gamma(\tilde\alpha) \cap \Phi_\gamma(\tilde\beta)$ does not share an edge with $\sigma_\gamma(h^{-1})(\Phi_\gamma(\tilde\alpha) \cap \Phi_\gamma(\tilde\beta))$.
It implies that $\Phi_\gamma(\tilde\alpha) \cap \Phi_\gamma(\tilde\beta)$ is the line segment of length at most $\operatorname{tr}_\gamma h = i(\alpha, \gamma)$.

In other words, the number of lifts of $\gamma$ intersecting $\tilde\alpha$ and $\tilde\beta$ simultaneously is at most $i(\alpha, \gamma)$.
Changing the role between $\alpha$ and $\beta$, we have $\Delta_\gamma(\tilde\alpha, \tilde\beta) \leq i(\beta, \gamma)$.
Therefore, $\Delta_{\gamma}(\tilde\alpha, \tilde\beta) \leq \min \{ i(\alpha, \gamma), i(\beta, \gamma) \}$. \vspace{1.5mm}

\noindent\eqref{enum:cro_num}
Write $N := \operatorname{lcm}\{ i(\alpha, \gamma), i(\beta, \gamma) \}$, $a := N / i(\alpha, \gamma)$ and $b := N / i(\beta, \gamma)$.
For contradiction, assume that $\Delta_\gamma(\tilde\alpha, \tilde\beta) \geq N + 1$.
Let $h$ be a primitive element of $\pi_1(\Sigma)$ preserving $\tilde\alpha$.
By Proposition \ref{prop:hyp_tree}\eqref{enum:sur2} and the assumption, $\Phi_\gamma(\tilde\alpha)$ and $\Phi_\gamma(\tilde\beta)$ are geodesics on $\mathcal{Y}_\gamma$ and their intersection is a line segment of length at least $N + 1$.
So there is an edge $e$ on $\Phi_\gamma(\tilde\alpha) \cap \Phi_\gamma(\tilde\beta)$ such that $\sigma_\gamma(h^a)e \subset \Phi_\gamma(\tilde\alpha) \cap \Phi_\gamma(\tilde\beta)$.
In other words, there is a lift $\tilde\gamma$ of $\gamma$ such that both $\tilde\gamma$ and $h^a \tilde\gamma$ crosses both $\tilde\alpha$ and $\tilde\beta$.
See Figure \ref{fig:cro_tria}.
\begin{figure}
\centering
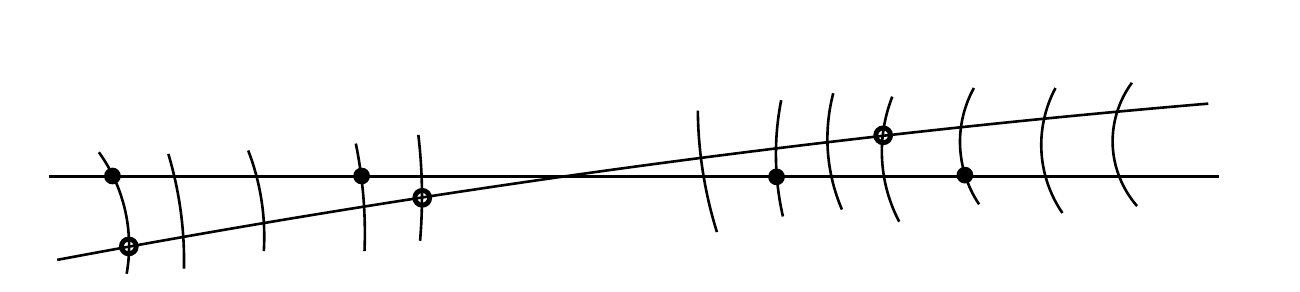
\caption{\label{fig:cro_tria}}
\end{figure}

Because $d_\gamma(\Phi_\gamma(\tilde\gamma), \Phi_\gamma(h^a\tilde\gamma))$ is a multiple of $i(\beta, \gamma)$, there is a primitive element $g$ of $\pi_1(\Sigma)$ preserving $\tilde\beta$ such that $g^b\tilde\gamma = h^a \tilde\gamma$.
Let $A$ and $B$ be the geodesic triangles which are contained in  $\tilde\alpha \cup \tilde\beta \cup \tilde\gamma$ and $\tilde\alpha \cup \tilde\beta \cup g^b\tilde\gamma$, respectively.
If $\xi: \mathbb{H}^2 \to \Sigma$ is a covering map, $\xi(A)$ and $\xi(B)$ are distinct contractible geodesic triangles on $\Sigma$ such that they share all vertices.
It is a contradiction because of Lemma \ref{lem:tria}.
Therefore, $\Delta_\gamma(\tilde\alpha, \tilde\beta) \leq N = \operatorname{lcm}\{i(\alpha, \gamma), i(\beta, \gamma)\}$.
\end{proof}

\begin{remark}[Further reading]
Morgan--Shalen \cite{ms91} defined the dual trees for measured laminations, which is more general than our definition.
Levitt--Paulin \cite{lp97} generally studied geometric actions on trees.
Skora \cite{s96} showed that nontrivial minimal tree actions of hyperbolic surface groups such that every parabolic isometry fixes a vertex are given from dual trees of measured laminations.
Rips and Bestvina--Feign \cite[Theorem 9.8]{bf95} proved that every finitely generated group acting freely on a tree is a free product of surface groups and free abelian groups.

The theory of tree action is originated from Bass--Serre's work \cite{s77, b93}.
It had been developed by Serre, Bass, Dunwoody (\emph{cf.} \cite{dd89}) and so on.
Sageev's study \cite{s95} provided methods for $\rm CAT(0)$ cube complexes including dual trees of simple closed geodesics.
Haglund--Wise \cite{hw08, hw12} developed special cube complexes.
It built a foundation for Agol \cite{a13} to prove the virtual Haken conjecture.
\end{remark}

\section{Lifts of Dehn twists} \label{sec:normalizer}

\begin{remark}
In this section, we write $T_\gamma$ as a self-homeomorphism homotopic to a Dehn twist along $\gamma$.
\end{remark}

It is well-known that the mapping class group $\mathrm{Mod}(\Sigma)$ is a subgroup of the outer automorphism group of $\pi_1(\Sigma)$.
Note that, when $\Sigma$ is closed, $\mathrm{Mod}(\Sigma)$ is an index $2$ subgroup of $\mathrm{Out}(\pi_1(\Sigma))$ by the Dehn--Niesen--Baer theorem; see Farb--Margalit \cite[Theorem 8.1]{fm12}.

\begin{remark} \label{rem:mcg_out}
Let us recall the relationship between mapping classes and outer automorphisms.
Let $F$ be a self-homeomorphism of $\Sigma$.
If $\xi: \mathbb{H}^2 \to \Sigma$ is a covering map and $\tilde{F}$ is a self-homeomorphism of $\mathbb{H}^2$ which is a lift of $F$, then $\xi(\tilde{F} g \tilde{F}^{-1} p) = F\xi( g \tilde{F}^{-1} p) = F \xi(\tilde{F}^{-1}p) = \xi(p)$ for every $g \in \pi_1(\Sigma)$ and $p \in \mathbb{H}^2$.
That is, $\tilde{F} g \tilde{F}^{-1}$ is an element of $\pi_1(\Sigma)$, and the conjugate action of $\tilde{F}$ on $\pi_1(\Sigma)$ is an automorphism of $\pi_1(\Sigma)$.
When we consider the collection of lifts of $F$, the set of their conjugate actions is indeed an outer automorphism of $\pi_1(\Sigma)$.

This method is invariant under homotopies of $\Sigma$.
If $\Phi_t : \Sigma \to \Sigma$ is a homotopy from $F$ to another self-homeomorphism $F'$ on $\Sigma$, then for every lift $\tilde{F}'$, there is a lift $\tilde\Phi_t : \mathbb{H}^2 \to \mathbb{H}^2$ of $\Phi$ which is a homotopy from a lift of $F$ to $\tilde{F}'$.
But varying time $t$ does not change the conjugate action of $\tilde{F}$ because the action of $\pi_1(\Sigma)$ on $\mathbb{H}^2$ is discrete.
As a result, the mapping class of $F$ determines an outer automorphism of $\pi_1(\Sigma)$.
\end{remark}

For a simple closed geodesic $\gamma$, recall that the dual tree $\mathcal{Y}_\gamma$ of $\gamma$ is a metric space with a metric $d_\gamma$ and an isometric action $\sigma_\gamma$. (See Definition \ref{def:dual}.)

\begin{proposition} \label{prop:dehn3}
If $\gamma$ is a simple closed geodesic on $\Sigma$ and $\tilde{T}_\gamma$ is a lift of a Dehn twist $T_\gamma$ along $\gamma$, then there is an isometry $f$ on the dual tree $\mathcal{Y}_\gamma$ of $\gamma$ such that $\sigma_\gamma( \tilde{T}_\gamma g \tilde{T}_\gamma^{-1})(v) = f \sigma_\gamma(g) f^{-1}(v)$ for all $g \in \pi_1(\Sigma)$ and $v \in \mathcal{Y}_\gamma$.
\end{proposition}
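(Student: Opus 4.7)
The plan is to construct $f$ directly from the combinatorial action of $\tilde{T}_\gamma$ on the underlying structure of $\mathcal{Y}_\gamma$. Since any mapping class representative of $T_\gamma$ can be chosen to be supported in the annular neighborhood $\overline{\mathcal{N}_r(\gamma)}$ for some $0 < r < R$ (via the collar lemma), I would replace $T_\gamma$ with such a representative. Any lift $\tilde{T}_\gamma$ then sends $\xi^{-1}(\gamma)$ bijectively onto itself, and consequently induces bijections on the set of lifts of $\gamma$ and on the set of connected components of $\mathbb{H}^2 \setminus \xi^{-1}(\gamma)$, preserving the incidence between components and their bounding lifts.

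First I would define $f : \mathcal{Y}_\gamma \to \mathcal{Y}_\gamma$ on the $0$-skeleton by sending each vertex $v$, which lies in a unique component $C_v$ of $\mathbb{H}^2 \setminus \xi^{-1}(\gamma)$, to the unique vertex of $\mathcal{Y}_\gamma$ inside $\tilde{T}_\gamma(C_v)$. Next, for each edge $e$ that meets a lift $\tilde\gamma_e$ of $\gamma$, I would extend $f$ over $e$ isometrically, mapping $e$ onto the edge that meets $\tilde{T}_\gamma(\tilde\gamma_e)$ so that each endpoint is sent to its already-prescribed image. Since $\tilde{T}_\gamma$ preserves incidence of components with lifts of $\gamma$, this is a well-defined simplicial bijection, and because every edge has length one, it is an isometry of $(\mathcal{Y}_\gamma, d_\gamma)$.

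Finally I would verify the conjugation formula. Fix $g \in \pi_1(\Sigma)$; by Remark \ref{rem:mcg_out}, the element $h := \tilde{T}_\gamma g \tilde{T}_\gamma^{-1}$ lies in $\pi_1(\Sigma)$, so $\sigma_\gamma(h)$ is defined. For a vertex $v$ lying in $C_v$, the observation recorded just after Definition \ref{def:dual} puts $\sigma_\gamma(h)(v)$ in the component $h(C_v) = \tilde{T}_\gamma g \tilde{T}_\gamma^{-1}(C_v)$; chasing $v$ successively through $f^{-1}$, $\sigma_\gamma(g)$, and $f$ lands $f\,\sigma_\gamma(g)\,f^{-1}(v)$ in the same component, and since each component contains a unique vertex, the two images coincide. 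The corresponding identity on edges follows from the equivariance property $\sigma_\gamma(g)(\tilde\gamma \cap \mathcal{Y}_\gamma) = (g\tilde\gamma)\cap \mathcal{Y}_\gamma$ of Definition \ref{def:dual}, combined with the fact that an isometry of a unit edge is determined by its values at the two endpoints. The main obstacle I anticipate is the preliminary reduction to an annular-supported representative of $T_\gamma$, since one must be careful that the induced combinatorial action on the dual tree is independent of auxiliary choices and is compatible with the outer-automorphism viewpoint of Remark \ref{rem:mcg_out}; once those points are in place, the equivariance of $f$ is a routine diagram chase on components and edges.
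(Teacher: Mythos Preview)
Your argument is correct and rests on the same idea as the paper's: once $T_\gamma$ is replaced by an annulus-supported representative, any lift $\tilde T_\gamma$ permutes both the set of lifts of $\gamma$ and the set of components of $\mathbb{H}^2\setminus\xi^{-1}(\gamma)$ compatibly with incidence, and that combinatorial permutation is precisely the desired isometry $f$ of $\mathcal{Y}_\gamma$.

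The only difference is packaging. The paper defines $f$ in one stroke as the well-defined composite $f=\Phi_\gamma\circ\tilde T_\gamma\circ\Phi_\gamma^{-1}$, using the $\pi_1(\Sigma)$-equivariant collapse $\Phi_\gamma:\mathbb{H}^2\to\mathcal{Y}_\gamma$ of Proposition~\ref{prop:hyp_tree} (and for this it further arranges that $T_\gamma$ preserves the circle foliation of $\mathcal{N}_r(\gamma)$ coming from $\Phi_\gamma$); the conjugation identity then drops out immediately from the equivariance of $\Phi_\gamma$. You instead build $f$ simplicially on vertices and edges and verify the identity by chasing components, which is slightly more elementary since it avoids the auxiliary map $\Phi_\gamma$ and the extra leaf-preserving hypothesis on the representative. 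Both routes hinge on the same reduction, justified by Remark~\ref{rem:mcg_out}, that the induced automorphism of $\pi_1(\Sigma)$ is unchanged under homotopy of $T_\gamma$.
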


From now on, we write $\sigma_\gamma(\tilde{T}_\gamma)$ instead of the isometry $f$.

\begin{proof}
Let $r > 0$ be sufficiently small satisfying that the closed $r$-neighborhood $\overline{\mathcal{N}(\gamma)}$ of $\gamma$ is homeomorphic to an annulus.
Let $\xi: \mathbb{H}^2 \to \Sigma$ be a covering map.
And let $\Phi_\gamma = \Phi_{\gamma, r}: \mathbb{H}^2 \to \mathcal{Y}_\gamma$ be a $\pi_1(\Sigma)$-equivariant surjective continuous map satisfying Proposition \ref{prop:hyp_tree}.
Define a relation $\sim$ on the open $r$-neighborhood $\mathcal{N}_r(\gamma)$ such that for all $x, y \in \mathcal{N}(\gamma)$,
\begin{center}
$x \sim y$ if and only if $\Phi_\gamma(p) = \Phi_\gamma(q)$ for some $p \in \xi^{-1}(x)$ and $q \in \xi^{-1}(y)$.
\end{center}
Then $\sim$ is an equivalence relation on $\mathcal{N}_r(\gamma)$ because $\Phi_\gamma$ is $\pi_1(\Sigma)$-equivariant.
Note that the decomposition of $\mathcal{N}_r(\gamma)$ by $\sim$ is a foliation of circles on $\mathcal{N}_r(\gamma)$ by Proposition \ref{prop:hyp_tree}\eqref{enum:sur1}.

Passing to homotopy, suppose that $T_\gamma$ supports $\mathcal{N}_r(\gamma)$ and preserves the equivalence relation $\sim$.
Then for all points $p, q \in \mathbb{H}^2$ satisfying that $\Phi_\gamma(p) = \Phi_\gamma(q)$, we have $\Phi_\gamma \tilde{T}_\gamma(p) = \Phi_\gamma \tilde{T}(q)$.
It implies that the map $f := \Phi_\gamma \tilde{T}_\gamma \Phi_\gamma^{-1}: \mathcal{Y}_\gamma \to \mathcal{Y}_\gamma$ is well-defined.
Since $\tilde{T}_\gamma$ preserves the separability of lifts of $\gamma$ (i.e., for all lifts $\tilde\gamma_1, \tilde\gamma_2, \tilde\gamma_3$ of $\gamma$, we have $\tilde\gamma_1$ separates $\tilde\gamma_2$ from $\tilde\gamma_3$ if and only if $\tilde{T}_\gamma(\tilde\gamma_1)$ separates $\tilde{T}_\gamma(\tilde\gamma_2)$ from $\tilde{T}_\gamma(\tilde\gamma_3)$) and the equivalence relation $\sim$, we have $f$ is an isometry on $\mathcal{Y}_\gamma$.

We claim that for every $g \in \pi_1(\Sigma)$ and $v \in \mathcal{Y}_\gamma$, we have $\sigma_\gamma(\tilde{T}_\gamma g \tilde{T}_\gamma^{-1}) v = f \sigma_\gamma(g) f^{-1}v$.
Since $gp \in \Phi_\gamma^{-1} \sigma_\gamma(g) \Phi_\gamma (p)$ for every point $p \in \mathbb{H}^2$, it holds that $\tilde{T}_\gamma g \tilde{T}_\gamma^{-1} (p)$ is an element of the set $\tilde{T}_\gamma ( \Phi_\gamma^{-1} \sigma_\gamma(g) \Phi_\gamma ) \tilde{T}_\gamma^{-1} (\Phi_\gamma^{-1} \Phi_\gamma) (p)$.
The $\Phi_\gamma$-image of the set $\tilde{T}_\gamma ( \Phi_\gamma^{-1} \sigma_\gamma(g) \Phi_\gamma ) \tilde{T}_\gamma^{-1} (\Phi_\gamma^{-1} \Phi_\gamma) (p)$ is a point on $\mathcal{Y}_\gamma$ and is equal to $\Phi_\gamma ((\tilde{T}_\gamma g \tilde{T}_\gamma^{-1}) p)$.
Because $\Phi_\gamma$ is $\pi_1(\Sigma)$-equivariant, we have $\sigma_\gamma(\tilde{T}_\gamma g \tilde{T}_\gamma^{-1}) \Phi_\gamma (p) = \Phi_\gamma (\tilde{T}_\gamma g \tilde{T}_\gamma^{-1}) (p)  = f \sigma_\gamma(g) f^{-1} \Phi_\gamma (p)$ for all $p \in \mathbb{H}^2$.
Because $\Phi_\gamma$ is surjective, we prove the claim.
\end{proof}

\begin{remark} \label{rem:ann_twi}
Let us remind the definition of a Dehn twist.
Let $\gamma$ be a simple closed geodesic of $\Sigma$.
For the annulus $A := (\mathbb{R} / \mathbb{Z}) \times [0, 1]$, let $\iota: A \to \Sigma$ be a topological embedding such that $\iota((\mathbb{R} / \mathbb{Z}) \times \{1/2\}) = \gamma$.
Define a map $T: A \to A$ by $T([s], t) = ([s+t], t)$ for every $([s], t) \in (\mathbb{R} / \mathbb{Z}) \times [0, 1]$.
Then the map $T_\gamma: \Sigma \to \Sigma$ defined by
$$T_\gamma(x) = 
\begin{cases} 
(\iota \circ T \circ \iota^{-1}) (x) & \text{if } x \in \iota(A), \\ 
x, & \text{otherwise}, 
\end{cases}$$
is a Dehn twist.
(See Farb-Margalit's definition \cite[Section 3.1.1]{fm12}.)

Note that $\pi_1(A) = \langle a \rangle$ acts on $\tilde{A} = \mathbb{R} \times [0, 1]$ by $a^m(s, t) = (s+m, t)$ for all $m \in \mathbb{Z}$ and $(s, t) \in \tilde{A}$.
If $\tilde{T}: \tilde{A} \to \tilde{A}$ is the lift of $T$ fixing $\mathbb{R} \times \{ 0 \}$ pointwise, the equation
$$\tilde{T}^m(s, 1) = (s + m, 1) = a^m(s, 1)$$
holds for all $m \in \mathbb{Z}$ and $s \in \mathbb{R}$.
We want to apply this equation to lifts of a Dehn twist.
Every lift $\tilde\iota: \tilde{A} \to \mathbb{H}^2$ of $\iota$ gives an injection $\tilde\iota_*: \pi_1(A) \to \pi_1(\Sigma)$.
If $\tilde{T}_\gamma$ is the lift of $T_\gamma$ which fixes a side of $\tilde\iota(\tilde{A})$ pointwise, then there is a primitive element $h \in \{ \tilde\iota_*(a), \tilde\iota_*(a^{-1}) \}$ such that we have $$\tilde{T}_\gamma^m(p) = h^m p$$  for every point $p$ on the other side of $\tilde\iota(\tilde{A})$ and $m \in \mathbb{Z}$.
\end{remark}

\begin{proposition} \label{prop:dehn_cal}
Let $\gamma$ be a simple closed geodesic on $\Sigma$, and let $v$ be a vertex of the dual tree $\mathcal{Y}_\gamma$.
Then there is a lift $\tilde{T}_\gamma$ of a Dehn twist $T_\gamma$ such that $\sigma_\gamma(\tilde{T}_\gamma)$ fixes the closed $1$-neighborhood of $v$ pointwise.

Furthermore, if $w$ is a vertex of $\mathcal{Y}_\gamma$ distinct from $v$ and $\vec{L}:[0, n] \to \mathcal{Y}_\gamma$ is the unit-speed geodesic path from $v$ to $w$, then there are primitive elements $h_1, \dots, h_n \in \pi_1(\Sigma)$ such that $h_i$ fixes the midpoint $\vec{L}(i - 1/2)$ for each $i = 1, \dots, n$ and such that $\sigma_\gamma(\tilde{T}_\gamma^m)w = \sigma_\gamma(h_1^m \dots h_n^m) w$ for every integer $m$; see Figure \ref{fig:dehn_cal}.
\end{proposition}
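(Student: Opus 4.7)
The plan is as follows. Take $\tilde{T}_\gamma$ to be the lift of $T_\gamma$ that restricts to the identity on the connected component $C_v$ of $\mathbb{H}^2 \setminus \xi^{-1}(\gamma)$ corresponding to $v$, outside the annular support of the twist. This is the unique lift which (by Remark~\ref{rem:ann_twi}) fixes pointwise the $C_v$-side of every lifted annulus bounding $C_v$; the compact support of $T_\gamma$ together with the connectedness of $C_v$ makes this choice consistent.

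For the first claim, I would show that $f := \sigma_\gamma(\tilde{T}_\gamma)$ fixes each of the three kinds of points in the closed $1$-neighborhood of $v$. For $v$ itself, any $p \in C_v$ away from the annular support satisfies $\Phi_\gamma(p) = v$ and $\tilde{T}_\gamma(p) = p$, so the identity $f \circ \Phi_\gamma = \Phi_\gamma \circ \tilde{T}_\gamma$ from Proposition~\ref{prop:dehn3} yields $f(v) = v$. For the midpoint of an edge at $v$, which lies on a bounding lift $\tilde\gamma$ of $\gamma$, Remark~\ref{rem:ann_twi} implies that $\tilde{T}_\gamma$ preserves the lifted annulus around $\tilde\gamma$ and hence preserves $\tilde\gamma$ setwise; applying $\Phi_\gamma$ sends the midpoint to itself. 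For an adjacent vertex $v'$ corresponding to the component $C'$ on the opposite side of $\tilde\gamma$, Remark~\ref{rem:ann_twi} gives that $\tilde{T}_\gamma$ acts on the $C'$-side as multiplication by a primitive element $h \in \pi_1(\Sigma)$ preserving $\tilde\gamma$; the isometry $\sigma_\gamma(h)$ fixes the midpoint of this edge, and since $h$ is orientation-preserving and hence preserves each half-plane bounded by $\tilde\gamma$, $\sigma_\gamma(h)$ cannot swap the edge's endpoints and thus fixes $v'$, whence $f(v') = \Phi_\gamma(hp) = \sigma_\gamma(h)(v') = v'$ for $p \in C'$ away from the annular support.

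For the second claim, enumerate the lifts of $\gamma$ crossed by $\vec{L}$ as $\tilde\gamma_1, \ldots, \tilde\gamma_n$ and let $C_0 = C_v, C_1, \ldots, C_n = C_w$ be the successive components traversed. I would prove inductively on $k$ that on $C_k$ (away from the annular support) the map $\tilde{T}_\gamma^m$ coincides with the deck transformation $h_1^m h_2^m \cdots h_k^m$ for appropriately chosen primitive elements $h_i \in \pi_1(\Sigma)$ preserving $\tilde\gamma_i$. For the inductive step, choose the lifted annulus chart $\tilde\iota_k: \tilde A \to \mathbb{H}^2$ around $\tilde\gamma_k$ whose $t = 0$ side lies on $C_{k-1}$; continuity with the $C_{k-1}$-side hypothesis together with the annular formula $\tilde T^m(s, t) = (s + mt, t)$ for the $m$-th power of the twist gives $\tilde{T}_\gamma^m(\tilde\iota_k(s, t)) = (h_1^m \cdots h_{k-1}^m) \cdot \tilde\iota_k(s + mt, t)$ inside the annulus, which at $t = 1$ reads as multiplication by $(h_1^m \cdots h_{k-1}^m) \cdot h_k^m$ on the $C_k$-side, where $h_k := \tilde\iota_{k, *}(a)$. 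Setting $k = n$ and choosing $p \in C_w$ away from the annular support, equivariance of $\Phi_\gamma$ combined with Proposition~\ref{prop:dehn3} gives $\sigma_\gamma(\tilde{T}_\gamma^m)(w) = \Phi_\gamma(\tilde{T}_\gamma^m(p)) = \Phi_\gamma\bigl((h_1^m \cdots h_n^m)\, p\bigr) = \sigma_\gamma(h_1^m \cdots h_n^m)(w)$.

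The main obstacle is the careful bookkeeping of orientations and deck transformations in the inductive step: one must consistently choose each $\tilde\iota_k$ so that its $t = 0$ side faces $C_{k-1}$, which ensures that each twist contributes as $h_k^m$ on the right of the growing product (rather than as $h_k^{-m}$), and that the resulting $h_k$ is genuinely primitive and preserves $\tilde\gamma_k$. Once the orientation conventions are pinned down, everything reduces to routine applications of Remark~\ref{rem:ann_twi} and the equivariance identity from Proposition~\ref{prop:dehn3}.
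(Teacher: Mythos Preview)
Your proposal is correct and, for the first assertion, matches the paper's argument almost exactly: both pick the lift of $T_\gamma$ fixing a point in the component $C_v$ and then verify that each edge at $v$ is preserved, hence fixed pointwise. Your treatment is in fact slightly more explicit than the paper's in checking that the adjacent vertex $v'$ is fixed, by invoking that the primitive $h$ preserves the two half-planes bounded by $\tilde\gamma$.

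For the second assertion, your argument is valid but organized differently from the paper's. You induct on $k$, the number of annuli already crossed, and track directly that $\tilde{T}_\gamma^m$ restricted to $C_k$ (outside the annular support) equals the deck transformation $h_1^m\cdots h_k^m$; the annular formula from Remark~\ref{rem:ann_twi} supplies the transition at each crossing. The paper instead inducts on the total distance $n$: it introduces an auxiliary lift $\tilde{T}_\gamma'$ based at the vertex $\vec{L}(1)$, applies the induction hypothesis to $\tilde{T}_\gamma'$ to produce $h_2,\dots,h_n$, and then shows via path-lifting that $(\tilde{T}_\gamma')^m = h_1^{-m}\tilde{T}_\gamma^m$ for a primitive $h_1$ preserving $\tilde\gamma_1$. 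Your approach is more hands-on and makes the ``telescoping'' across annuli visible; the paper's approach is terser and avoids re-deriving the annular formula at each step by delegating it to a single comparison of lifts. The orientation bookkeeping you flag as the main obstacle is exactly the content of the paper's claim $(\tilde{T}_\gamma')^m = h_1^{-m}\tilde{T}_\gamma^m$, and both arguments resolve it the same way, via the uniqueness in the path-lifting property together with Remark~\ref{rem:ann_twi}.
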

\begin{figure}
\centering
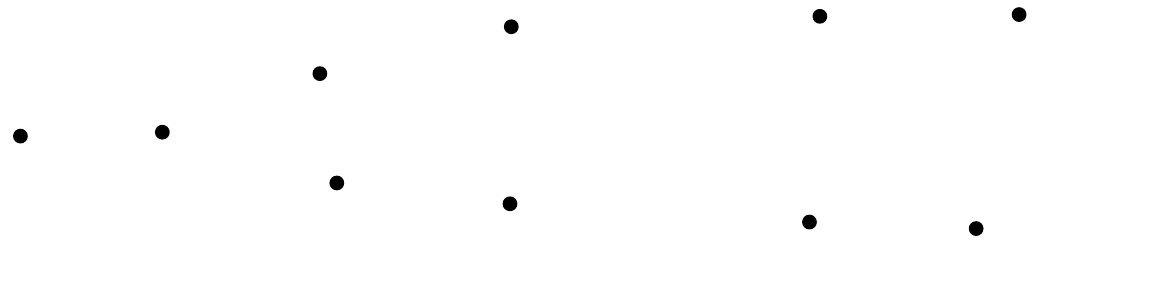
\caption{\label{fig:dehn_cal}}
\end{figure}

\begin{proof}
Let $r > 0$ be small enough that the closed $r$-neighborhood of $\gamma$ is homeomorphic to an annulus.
Let $\Phi_\gamma : = \Phi_{\gamma, r}: \mathbb{H}^2 \to \mathcal{Y}_\gamma$ be a $\pi_1(\Sigma)$-equivariant surjective continuous map satisfying Proposition \ref{prop:hyp_tree}.
Passing to homotopy, suppose that $T_\gamma$ is a Dehn twist along $\gamma$ such that the support of $T_\gamma(\gamma)$ is the $r$-neighborhood of $\gamma$.
Let $p$ be a point on $\mathbb{H}^2$ such that $\Phi_\gamma(p) = v$.

Because the projective image of $p$ on $\Sigma$ is not contained in the open support of $T_\gamma$, there is a lift $\tilde{T}_\gamma$ of $T_\gamma$ which fixes $p$.
For every edge $e$ incident to $v$, if $\tilde\gamma$ is the lift of $\gamma$ such that $\Phi_\gamma(\tilde\gamma)$ is the midpoint of $e$, then $\tilde{T}_\gamma(\tilde\gamma) = \tilde\gamma$ because some connected component of the boundary of the $r$-neighborhood of $\tilde\gamma$ is fixed pointwise by $\tilde{T}_\gamma$.
So $\sigma_\gamma(\tilde{T}_\gamma)e = e$.
Therefore, $\tilde{T}_\gamma$ fixes the closed $1$-neighborhood of $v$ pointwise.

To prove the second statement in Proposition \ref{prop:dehn_cal}, we use an induction on the distance $n$ between $v$ and $w$.
If $v$ and $w$ are joined by an edge $e$ and a primitive element $h$ of $\pi_1(\Sigma)$ fixes the edge $e$, then we have $\sigma_\gamma(\tilde{T}_\gamma^m)w = w = \sigma_\gamma(h^m) w$ for every integer $m$ by the above.

Assume that $n = d_\gamma(v, w) \geq 2$.
Let an integer $m$ be given.
Let $\tilde{T}_\gamma'$ be the lift of $T_\gamma$ fixing the closed $1$-neighborhood of $\vec{L}(1)$.
By the induction hypothesis, there are primitive elements $h_2, \dots, h_n \in \pi_1(\Sigma)$ fixing midpoints $\vec{L}(3/2), \dots, \vec{L}(n- 1/2)$, respectively, such that $\sigma_\gamma((\tilde{T}_\gamma')^m)(w) = \sigma_\gamma(h_2^m \dots h_n^m) w$.

We claim that $(\tilde{T}_\gamma')^m = h_1^{-m} \tilde{T}_\gamma^m$ for some primitive element $h_1$ fixing $\vec{L}(1/2)$.
Let $\tilde\gamma$ be the lift of $\gamma$ such that $\Phi_\gamma(\tilde\gamma) = \vec{L}(1/2)$.
Then $\tilde{T}_\gamma'$ fixes some boundary component $\partial_+\mathcal{N}_r(\tilde\gamma)$ of the closed $r$-neighborhood of $\tilde\gamma$ pointwise.
By Remark \ref{rem:ann_twi}, there is a primitive element $h_1 \in \pi_1(\Sigma)$ preserving $\tilde\gamma$ such that $\tilde{T}_\gamma^m(p) = h_1^m p$ for every $p \in \partial_+\mathcal{N}_r(\tilde\gamma)$.
Because $h_1^{-m} \tilde{T}_\gamma^m$ fixes $p$ and is a lift of $T_\gamma^m$, we have $h_1^{-m} \tilde{T}_\gamma^m = (\tilde{T}_\gamma')^m$ by the path lifting property.
Therefore, $\sigma_\gamma(\tilde{T}_\gamma^m)w = \sigma_\gamma(h_1^m h_2^m \dots h_n^m) w$.
\end{proof}

\section{The Proof of Theorem \ref{thm:pp_main}} \label{sec:fpc}

Let us prove Theorem \ref{thm:pp_main}.
First, we will introduce the general definition of $\operatorname{PP}_n$. (\emph{cf.} Definition \ref{def:pp_simple})
We recall that if $\mathcal{F}$ is a set of simple closed geodesics on $\Sigma$, a lift of $\mathcal{F}$ means a lift of a simple closed geodesic in $\mathcal{F}$.
And recall that a multicurve is a set of pairwise disjoint simple closed geodesics on $\Sigma$.

\begin{definition}[General definition of $\operatorname{PP}_n$] \label{def:ppp}
Let $\mathcal{A}$ and $\mathcal{B}$ be multicurves on $\Sigma$, and let $\alpha$ and $\beta$ be simple closed geodesics in $\mathcal{A}$ and $\mathcal{B}$, respectively.
For $n > 2$, a simple closed geodesic $\gamma$ of $\Sigma$ is said to be contained in  $\operatorname{PP}_n(\mathcal{A}, \alpha, \mathcal{B}, \beta)$ if for some lift $\tilde\gamma$ of $\gamma$, for some lift $\tilde\alpha$ of $\alpha$ and (at least) $n$ lifts $\tilde\beta_1, \dots, \tilde\beta_n$ of $\mathcal{B}$ such that the following hold.

\begin{enumerate}[label=(\roman*)]
\item \label{enum:fp-2}
Each $\tilde\beta_i$ separates $\tilde\beta_{i-1}$ from $\tilde\beta_{i+1}$.
\item \label{enum:fp1}
There is an index $i_0 \in \{2, \dots, n-1\}$ such that $\tilde\beta_{i_0}$ is a lift of $\beta$.
\item \label{enum:fp2}
For all $i = 1, \dots, n$, the triple $\tilde\gamma$, $\tilde\alpha$ and $\tilde\beta_i$ cross each other.
\item \label{enum:fp3}
If a lift $\tilde\alpha'$ of $\mathcal{A}$ intersects $\tilde\gamma$ and some $\tilde\beta_i$, then $\tilde\alpha' = \tilde\alpha$.
\end{enumerate}
\end{definition}

The set $\operatorname{PP}_n(\mathcal{A}, \alpha, \beta)$ in Definition \ref{def:pp_simple} is equal to $\operatorname{PP}_n(\mathcal{A}, \alpha, \{\beta\}, \beta)$ with $\alpha \pitchfork \beta$.
We collect basic properties of $\operatorname{PP}_n$ in the next lemma.

\begin{lemma}\label{lem:admi}
Let $\mathcal{A}$ and $\mathcal{B}$ be multicurves, and let simple closed geodesics $\alpha \in \mathcal{A}$ and $\beta \in \mathcal{B}$ be given.
For $n > 2$, the following hold.
\begin{enumerate}
\item
\label{enum:ad_non}
If $\operatorname{PP}_n(\mathcal{A}, \alpha, \mathcal{B}, \beta)$ is nonempty, then $i(\alpha, \beta) > 0$.
\item
\label{enum:ad_len}
If $2 < m \leq n$, then $\operatorname{PP}_m(\mathcal{A}, \alpha, \mathcal{B}, \beta) \supseteq \operatorname{PP}_n(\mathcal{A}, \alpha, \mathcal{B}, \beta)$.
\item
\label{enum:ad_for}
If $\mathcal{A}_1$ and $\mathcal{A}_2$ are multicurves satisfying that $\alpha \in \mathcal{A}_1 \subseteq \mathcal{A}_2$, then $$\operatorname{PP}_n(\mathcal{A}_1, \alpha, \mathcal{B}, \beta) \supseteq \operatorname{PP}_n(\mathcal{A}_2, \alpha, \mathcal{B}, \beta).$$
\item
\label{enum:ad_lat}
If $\mathcal{B}_1$ and $\mathcal{B}_2$ are multicurves satisfying that $\beta \in \mathcal{B}_1 \subseteq \mathcal{B}_2$, then $$\operatorname{PP}_n(\mathcal{A}, \alpha, \mathcal{B}_1, \beta) \subseteq \operatorname{PP}_n(\mathcal{A}, \alpha, \mathcal{B}_2, \beta).$$
\end{enumerate}
\end{lemma}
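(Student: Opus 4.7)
The plan is to verify each of the four statements directly from Definition \ref{def:ppp}. Statements \eqref{enum:ad_non}, \eqref{enum:ad_for}, and \eqref{enum:ad_lat} will be immediate consequences of the definition, while \eqref{enum:ad_len} will require a small combinatorial choice of a subsequence of the lifts $\tilde\beta_1, \dots, \tilde\beta_n$ that preserves condition \ref{enum:fp1}.

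For \eqref{enum:ad_non}, I would take any $\gamma \in \operatorname{PP}_n(\mathcal{A}, \alpha, \mathcal{B}, \beta)$ with witnessing lifts $\tilde\gamma, \tilde\alpha, \tilde\beta_1, \dots, \tilde\beta_n$. By \ref{enum:fp1} some $\tilde\beta_{i_0}$ is a lift of $\beta$, and by \ref{enum:fp2} it crosses $\tilde\alpha$, so $i(\alpha,\beta)>0$. For \eqref{enum:ad_for}, given $\gamma \in \operatorname{PP}_n(\mathcal{A}_2, \alpha, \mathcal{B}, \beta)$, the same witnessing lifts will work for $\operatorname{PP}_n(\mathcal{A}_1, \alpha, \mathcal{B}, \beta)$: conditions \ref{enum:fp-2}–\ref{enum:fp2} are unchanged, and condition \ref{enum:fp3} only gets easier since fewer lifts qualify as lifts of $\mathcal{A}_1$. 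For \eqref{enum:ad_lat}, given $\gamma \in \operatorname{PP}_n(\mathcal{A}, \alpha, \mathcal{B}_1, \beta)$, each $\tilde\beta_i$ is a lift of $\mathcal{B}_1 \subseteq \mathcal{B}_2$, so all four conditions transfer verbatim.

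The slightly more interesting point is \eqref{enum:ad_len}. Starting from $\gamma \in \operatorname{PP}_n(\mathcal{A},\alpha,\mathcal{B},\beta)$ with witnessing lifts and index $i_0 \in \{2, \dots, n-1\}$ for which $\tilde\beta_{i_0}$ is a lift of $\beta$, I would extract a consecutive subsequence $\tilde\beta_j, \tilde\beta_{j+1}, \dots, \tilde\beta_{j+m-1}$ (relabelled $\tilde\beta'_1, \dots, \tilde\beta'_m$) such that the new index of $\tilde\beta_{i_0}$ falls in $\{2,\dots,m-1\}$. This amounts to choosing an integer $j$ with
\[
\max(1,\, i_0 - m + 2) \,\leq\, j \,\leq\, \min(n - m + 1,\, i_0 - 1),
\]
and one checks that the assumptions $2 < m \leq n$ and $i_0 \in \{2,\dots,n-1\}$ make this range nonempty. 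Condition \ref{enum:fp-2} for the subsequence follows because the $\tilde\beta_i$'s are already linearly ordered by separation on $\mathbb{H}^2$; conditions \ref{enum:fp2} and \ref{enum:fp3} are inherited verbatim from the larger family.

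No serious obstacle is expected: the only mildly delicate point is the index bookkeeping in \eqref{enum:ad_len}, which is handled by the interval computation above.
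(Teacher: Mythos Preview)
Your proposal is correct and follows essentially the same approach as the paper: both argue each item directly from Definition \ref{def:ppp}, with the only work being the choice of subsequence in \eqref{enum:ad_len}. The paper chooses any subsequence of length $m$ containing $\tilde\beta_1$, $\tilde\beta_{i_0}$, and $\tilde\beta_n$ (so that $\tilde\beta_{i_0}$ is automatically interior), whereas you take a consecutive block and verify the index range is nonempty; these are interchangeable implementations of the same idea.
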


\begin{proof}
\noindent\eqref{enum:ad_non}
If some simple closed geodesic is contained in $\operatorname{PP}_n( \mathcal{A}, \alpha, \mathcal{B}, \beta)$, then $\tilde\alpha \pitchfork \tilde\beta$ for some lift $\tilde\alpha$ of $\alpha$ and lift $\tilde\beta$ of $\beta$ by Definition \ref{def:ppp}\ref{enum:fp1} and \ref{enum:fp2}.
It implies that $i(\alpha,\beta) > 0$. \vspace{1.5mm}

\noindent\eqref{enum:ad_len}
Choose a simple closed geodesic $\gamma \in \operatorname{PP}_n(\mathcal{A}, \alpha, \mathcal{B}, \beta)$.
Then there are a lift $\tilde\gamma$ of $\gamma$, a lift $\tilde\alpha$ of $\alpha$ and $n$ lifts $\tilde\beta_1, \dots, \tilde\beta_n$ of $\mathcal{B}$ satisfying \ref{enum:fp-2} to \ref{enum:fp3} of Definition \ref{def:ppp}.
If $\tilde\beta_{i_0}$ (for some $1 < i_0 < n$) is a lift of $\beta$, choose a subsequence $\tilde\beta_{j_1}, \dots, \tilde\beta_{j_m}$ of length $m$ from $\{\tilde\beta_1, \dots, \tilde\beta_n\}$ such that $\tilde\beta_1$, $\tilde\beta_{i_0}$ and $\tilde\beta_n$ are contained in the subsequence.
Then $\tilde\gamma$, $\tilde\alpha$ and the subsequence $\tilde\beta_{j_1}, \dots, \tilde\beta_{j_m}$ satisfy \ref{enum:fp-2} to \ref{enum:fp3} of Definition \ref{def:ppp} of $\operatorname{PP}_m$ for $\gamma$.
That is, $\gamma \in \operatorname{PP}_m(\mathcal{A}, \alpha, \mathcal{B}, \beta)$.
\vspace{1.5mm}

\noindent\eqref{enum:ad_for}
Let a simple closed geodesic $\gamma \in \operatorname{PP}_n(\mathcal{A}_2, \alpha, \mathcal{B}, \beta)$ be given.
Then there are a lift $\tilde\gamma$ of $\gamma$, a lift $\tilde\alpha$ of $\alpha$ and $n$ lifts $\tilde\beta_1 \dots \tilde\beta_n$ of $\mathcal{B}$ satisfying \ref{enum:fp-2} to \ref{enum:fp3} of Definition \ref{def:ppp}.
If a lift $\tilde\alpha'$ of $\mathcal{A}_1$ intersects both $\tilde\gamma$ and some $\tilde\beta_i$, then $\tilde\alpha' = \tilde\alpha$ because $\tilde\alpha'$ is a lift of $\mathcal{A}_2$.
So \ref{enum:fp3} of Definition \ref{def:ppp} holds.
Note that the conditions \ref{enum:fp-2} to \ref{enum:fp2} of Definition \ref{def:ppp} are satisfied immediately with $\tilde\gamma$, $\tilde\alpha$ and $\{ \tilde\beta_1, \dots, \tilde\beta_n \}$.
Therefore, $\gamma \in \operatorname{PP}_n(\mathcal{A}_1, \alpha, \mathcal{B}, \beta)$.
\vspace{1.5mm}

We leave \eqref{enum:ad_lat} as an exercise.
\end{proof}

\begin{remark}
The below proposition (Proposition \ref{prop:half_ele}) induces the converse of Lemma \ref{lem:admi}\eqref{enum:ad_non} so the following statement is satisfied. \emph{Whenever $\mathcal{A}$ and $\mathcal{B}$ are multicurves and $\alpha$ and $\beta$ are simple closed geodesics in $\mathcal{A}$ and $\mathcal{B}$, respectively, we have $i(\alpha, \beta) > 0$ if and only if $\operatorname{PP}_n(\mathcal{A}, \alpha, \mathcal{B}, \beta) \neq \emptyset$ for all $n > 2$.}
\end{remark}

\begin{lemma} \label{lem:no_inc}
Let $\alpha$ and $\beta$ be simple closed geodesics crossing each other.
If  a simple closed geodesic $\gamma$ is contained in $\operatorname{PP}_n(\{\alpha\}, \alpha, \beta) = \operatorname{PP}_n(\{\alpha\}, \alpha, \{\beta\}, \beta)$ for some $n$, then $n \leq \operatorname{lcm}\{i(\alpha, \beta), i(\beta, \gamma)\}$.
\end{lemma}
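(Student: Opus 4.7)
The plan is to read off the bound essentially immediately from Proposition~\ref{prop:num_lift}\eqref{enum:cro_num}, after translating the data given by membership in $\operatorname{PP}_n(\{\alpha\}, \alpha, \beta)$ into a statement about the counting quantity $\Delta_\beta(\tilde\alpha, \tilde\gamma)$.

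First I would unpack Definition~\ref{def:ppp}. From $\gamma \in \operatorname{PP}_n(\{\alpha\}, \alpha, \{\beta\}, \beta)$ one obtains a lift $\tilde\gamma$ of $\gamma$, a lift $\tilde\alpha$ of $\alpha$, and $n$ lifts $\tilde\beta_1, \dots, \tilde\beta_n$ of $\beta$, satisfying the four properties \ref{enum:fp-2}--\ref{enum:fp3}. I will only need two consequences of these: (a) $\tilde\alpha$ crosses $\tilde\gamma$, which follows from \ref{enum:fp2}; and (b) the $\tilde\beta_i$ are pairwise distinct and each crosses both $\tilde\alpha$ and $\tilde\gamma$. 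Distinctness follows from the separation condition \ref{enum:fp-2} (if $\tilde\beta_i = \tilde\beta_j$ for $i<j$, then $\tilde\beta_i$ could not separate $\tilde\beta_{i-1}$ from $\tilde\beta_{i+1}$ consistently with $\tilde\beta_j$ doing the analogous job), and the two-sided crossing is \ref{enum:fp2} again.

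Consequence (b) says that there are at least $n$ distinct lifts of $\beta$ which meet both $\tilde\alpha$ and $\tilde\gamma$, so by the definition of $\Delta_\beta$,
\[
n \;\leq\; \Delta_\beta(\tilde\alpha, \tilde\gamma).
\]
Now I apply Proposition~\ref{prop:num_lift}\eqref{enum:cro_num}, taking the role of the simple closed geodesic whose lifts are being counted to be $\beta$, and using that by (a) the lifts $\tilde\alpha$ and $\tilde\gamma$ cross. This yields
\[
\Delta_\beta(\tilde\alpha, \tilde\gamma) \;\leq\; \operatorname{lcm}\{ i(\alpha,\beta),\, i(\gamma,\beta) \}.
\]
Combining the two inequalities gives exactly $n \leq \operatorname{lcm}\{i(\alpha,\beta), i(\beta,\gamma)\}$, as required.

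There is essentially no serious obstacle here; the lemma is a clean corollary of Proposition~\ref{prop:num_lift}\eqref{enum:cro_num}. The only point that requires a moment of care is verifying that the $n$ lifts $\tilde\beta_1, \dots, \tilde\beta_n$ are distinct (so that they contribute $n$ to the counting function $\Delta_\beta$), which, as indicated above, is forced by the separation axiom \ref{enum:fp-2} of Definition~\ref{def:ppp}. Note that the simpler single-curve definition $\operatorname{PP}_n(\{\alpha\},\alpha,\beta)$ of Definition~\ref{def:pp_simple} is being invoked here, so condition \ref{enum:fp3} plays no active role in the argument.
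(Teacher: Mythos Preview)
Your argument is correct and matches the paper's own proof essentially line for line: unpack Definition~\ref{def:ppp} to obtain $n$ lifts of $\beta$ that cross both $\tilde\alpha$ and $\tilde\gamma$, then invoke Proposition~\ref{prop:num_lift}\eqref{enum:cro_num}. The only difference is that you spell out the distinctness of the $\tilde\beta_i$ and the intermediate inequality $n\le\Delta_\beta(\tilde\alpha,\tilde\gamma)$, which the paper leaves implicit.
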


\begin{proof}
Assume that $\gamma \in \operatorname{PP}_n(\{\alpha\}, \alpha, \beta)$ for some $n > 2$.
Then there are a lift $\tilde\gamma$ of $\gamma$, a lift $\tilde\alpha$ of $\alpha$ and $n$ lifts $\tilde\beta_1, \dots, \tilde\beta_n$ of $\beta$ such that $\tilde\alpha$, $\tilde\beta_i$ and $\tilde\gamma$ cross each other for each $i$ by \ref{enum:fp2} of Definition \ref{def:ppp}.
Then $n \leq \operatorname{lcm}\{ i(\alpha, \beta), i(\beta, \gamma) \}$ by Proposition \ref{prop:num_lift}\eqref{enum:cro_num}.
\end{proof}

\begin{proposition} \label{prop:half_ele}
Let $\alpha$, $\beta$ and $\gamma$ be simple closed geodesics such that $i(\alpha, \beta) > 0$ and $i(\beta, \gamma) > 0$.
Let $n > 2$ be given.
If an integer $m$ satisfies that $$\lvert m \rvert \geq \frac{n + 2 \cdot i(\alpha, \gamma)}{i(\beta, \gamma)} + 2,$$ then for every multicurve $\mathcal{B}$ containing $\beta$,
$$T_\beta^m \left( \{\alpha\} \cup \operatorname{PP}_3(\{ \alpha \}, \alpha, \mathcal{B}, \beta) \right) \subseteq \operatorname{PP}_n(\mathcal{B}, \beta, \{ \gamma \}, \gamma).$$
\end{proposition}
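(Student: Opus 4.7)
The plan is to verify the four conditions of Definition \ref{def:ppp} directly for the geodesic representative of $T_\beta^m(\delta)$, using the explicit description of a lift of $T_\beta^m$ provided by Remark \ref{rem:ann_twi} and the combinatorial calculation of Proposition \ref{prop:dehn_cal}.

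First I would set up lifts. Fix a lift $\tilde\beta$ of $\beta$. For $\delta = \alpha$, set $\tilde\delta := \tilde\alpha$ for any lift $\tilde\alpha$ crossing $\tilde\beta$; for $\delta \in \operatorname{PP}_3(\{\alpha\}, \alpha, \mathcal{B}, \beta)$, take witness lifts $\tilde\delta$, $\tilde\alpha$, and $\tilde\beta_1, \tilde\beta_2 = \tilde\beta, \tilde\beta_3$ from Definition \ref{def:ppp}, so that $\tilde\delta, \tilde\alpha, \tilde\beta$ mutually cross and $\tilde\beta_1, \tilde\beta_3$ lie on opposite sides of $\tilde\beta$. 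By Remark \ref{rem:ann_twi}, choose the lift $\tilde{T}_\beta$ of $T_\beta$ fixing pointwise one boundary component $\partial_-$ of the annular collar of $\tilde\beta$, so that $\tilde{T}_\beta^m$ acts on the other boundary $\partial_+$ as $h^m$ for a primitive $h \in \pi_1(\Sigma)$ preserving $\tilde\beta$. The geodesic representative of $T_\beta^m(\delta)$ then has a lift $\tilde\eta$ (namely, the axis of $\tilde{T}_\beta^m d \tilde{T}_\beta^{-m}$, with $d$ preserving $\tilde\delta$) whose two ideal endpoints match those of $\tilde\delta$ on the $\partial_-$ side and those of $h^m\tilde\delta$ on the $\partial_+$ side.

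Second, I would count the lifts of $\gamma$ crossed by both $\tilde\eta$ and $\tilde\beta$, working in the dual tree $\mathcal{Y}_\gamma$. By Proposition \ref{prop:hyp_tree}, this count equals the length of $\Phi_\gamma(\tilde\eta) \cap \Phi_\gamma(\tilde\beta)$. The explicit form of $\tilde\eta$ together with Proposition \ref{prop:dehn_cal} shows that this overlap contains the translation of an edge of $\Phi_\gamma(\tilde\beta)$ by $h^m$, giving length at least $|m|\cdot i(\beta,\gamma)$ minus the contribution from the two ``end'' regions where $\tilde\eta$ coincides with $\tilde\alpha$ or $h^m\tilde\alpha$. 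By Proposition \ref{prop:num_lift}, each of those end contributions is bounded by $i(\alpha, \gamma)$. Discarding a further buffer of two crossings and invoking $|m| \geq (n + 2i(\alpha,\gamma))/i(\beta,\gamma) + 2$, I obtain $n$ consecutive lifts $\tilde\gamma_1, \dots, \tilde\gamma_n$ of $\gamma$, each transverse to both $\tilde\eta$ and $\tilde\beta$ and located strictly inside the twisted window. Conditions \ref{enum:fp-2}, \ref{enum:fp1}, and \ref{enum:fp2} of Definition \ref{def:ppp} then follow immediately.

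The main obstacle will be condition \ref{enum:fp3}: no lift $\tilde\beta'$ of $\mathcal{B}$ other than $\tilde\beta$ crosses both $\tilde\eta$ and some $\tilde\gamma_i$. Any such $\tilde\beta'$ is disjoint from $\tilde\beta$ (as $\mathcal{B}$ is a multicurve), so it lies entirely on one side of $\tilde\beta$; by Remark \ref{rem:sep} and Proposition \ref{prop:num_lift}\eqref{enum:dis_num}, the set of $\gamma$-lifts crossing both $\tilde\beta$ and $\tilde\beta'$ is bounded and concentrated near the single edge of $\mathcal{Y}_\gamma$ nearest to $\Phi_\gamma(\tilde\beta')$. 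For $\delta = \alpha$, a $\tilde\beta'$ crossing $\tilde\eta$ must do so in the non-twisted end portions (where $\tilde\eta$ follows $\tilde\alpha$ or $h^m\tilde\alpha$), and the two-edge buffer chosen in Step 2 pushes all $\tilde\gamma_i$ out of its reach. For $\delta \in \operatorname{PP}_3$, the framing lifts $\tilde\beta_1$ and $\tilde\beta_3$ force any offending $\tilde\beta'$ to cross one of them or $\tilde\alpha$, contradicting either disjointness within the multicurve $\mathcal{B}$ or the uniqueness clause in the $\operatorname{PP}_3$ hypothesis. This case analysis, calibrated so that the $2\cdot i(\alpha,\gamma)$ and $+2$ terms in the bound on $|m|$ exactly absorb the interferences from the two ends, is the technical core of the proposition.
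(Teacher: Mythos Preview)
Your high-level strategy matches the paper's: lift $T_\beta^m$, count $\gamma$-lifts in the dual tree $\mathcal{Y}_\gamma$, and verify Definition~\ref{def:ppp}. The counting in your second step is essentially the same as the paper's second claim. However, your treatment of condition~\ref{enum:fp3} has a genuine gap, and this is exactly where the paper's proof is most careful.

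The paper does \emph{not} choose the $\tilde\gamma_i$ merely as lifts crossing both $\tilde\eta$ and $\tilde\beta$, and it does not attempt to push offenders away with a fixed ``two-edge buffer.'' Instead it first fixes, on each side of $\tilde\beta_0$, the \emph{adjacent} $\mathcal{B}$-lifts $\tilde\beta_{-1},\tilde\beta_1$ (so that $\tilde\beta_0$ is the unique lift of $\mathcal{B}$ separating them), and then selects the $\tilde\gamma_i$ as lifts of $\gamma$ that \emph{separate} $\tilde\beta_{-1}$ from $\tilde{T}_\beta^m\tilde\beta_1=h_0^m\tilde\beta_1$. With this choice, any lift $\tilde\beta''$ of $\mathcal{B}$ that crosses both $\tilde{T}_\beta^m\tilde\delta$ and some $\tilde\gamma_i$ is forced, by a one-line separation argument, to itself separate $\tilde\beta_{-1}$ from $h_0^m\tilde\beta_1$, hence equals $\tilde\beta_0$. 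The role of the term $2\cdot i(\alpha,\gamma)$ is different from what you describe: it bounds $d_\gamma(\Phi_\gamma(\tilde\beta_{-1}),\Phi_\gamma(\tilde\beta_1))$ (the paper's first claim), not an ``end contribution'' coming from $\tilde\alpha$; the $+2$ in the hypothesis absorbs the lengths $l(P_{-1}),l(P_1)\leq i(\beta,\gamma)$ of the projections of $\tilde\beta_{\pm1}$ onto $\Phi_\gamma(\tilde\beta_0)$.

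Your argument for \ref{enum:fp3} does not go through as written. For $\delta=\alpha$ there are infinitely many lifts $\tilde\beta'$ of $\mathcal{B}$ crossing $\tilde\eta$ on the ``untwisted'' side (one for every $\mathcal{B}$-crossing of $\tilde\alpha$), and while Proposition~\ref{prop:num_lift}\eqref{enum:dis_num} bounds how many $\tilde\gamma_i$ a \emph{single} such $\tilde\beta'$ can meet, it gives no uniform reason why a constant buffer of two edges clears \emph{all} of them. For $\delta\in\operatorname{PP}_3(\{\alpha\},\alpha,\mathcal{B},\beta)$ your appeal to the ``uniqueness clause'' is misdirected: condition~\ref{enum:fp3} in that hypothesis constrains lifts of $\{\alpha\}$, not of $\mathcal{B}$, so a $\tilde\beta'$ crossing $\tilde\alpha$ is not excluded. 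The fix is precisely the paper's device: replace your framing lifts by $\mathcal{B}$-lifts \emph{adjacent} to $\tilde\beta_0$ and choose the $\tilde\gamma_i$ to separate them after twisting; then \ref{enum:fp3} becomes immediate and the arithmetic with $2\cdot i(\alpha,\gamma)$ and $+2$ lines up exactly.
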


\begin{proof}
Fix a multicurve $\mathcal{B}$ containing $\beta$.
And choose $\delta \in \{\alpha\} \cup \operatorname{PP}_3(\{ \alpha \}, \alpha, \mathcal{B}, \beta)$.
Let $\tilde\delta$ and $\tilde\alpha$ be lifts of $\delta$ and $\alpha$, respectively, and let $\tilde\beta_{-1}$, $\tilde\beta_0$ and $\tilde\beta_1$ be lifts of $\mathcal{B}$ satisfying the following; see Figure \ref{fig:pp_H}.

\begin{itemize}
\item If $\delta = \alpha$, let $\tilde\delta = \tilde\alpha$ be an arbitrary lift of $\alpha$.
Let $\tilde\beta_0$ be a lift of $\beta$ crossing $\tilde\alpha$.
Let $\tilde\beta_{-1}$ and $\tilde\beta_1$ be lifts of $\mathcal{B}$ crossing $\tilde\alpha$ such that $\tilde\beta_0$ is the unique lift of $\mathcal{B}$ separating $\tilde\beta_{-1}$ from $\tilde\beta_1$.

\item If $\delta \in PP_3(\{ \alpha \}, \alpha, \mathcal{B}, \beta)$, then let $\tilde\delta$ and $\tilde\alpha$ be lifts of $\delta$ and $\alpha$, respectively, and let $\tilde\beta_{-1}, \tilde\beta_0, \tilde\beta_1$ be lifts of $\mathcal{B}$ satisfying \ref{enum:fp1} to \ref{enum:fp3} of Definition \ref{def:ppp} and satisfying that $\tilde\beta_0$ separates $\tilde\beta_{-1}$ from $\tilde\beta_1$.
If $\tilde\beta_0$ and some $\tilde\beta_i$ are separated by a lift $\tilde\beta'$ of $\mathcal{B}$, then substitute $\tilde\beta_i$ to $\tilde\beta'$.
After then, the lifts $\tilde\delta, \tilde\alpha, \tilde\beta_{-1}, \tilde\beta_0, \tilde\beta_1$ also satisfy Definition \ref{def:ppp} for $\delta \in PP_3(\{ \alpha \}, \alpha, \mathcal{B}, \beta)$.
Repeating this process, we obtain the fact that $\tilde\beta_0$ is the unique lift of $\mathcal{B}$ separating $\tilde\beta_{-1}$ from $\tilde\beta_1$.
Note that $\tilde\beta_0$ is a lift of $\beta$ by \ref{enum:fp1} of Definition \ref{def:ppp}.
\end{itemize}

\begin{figure}
\centering
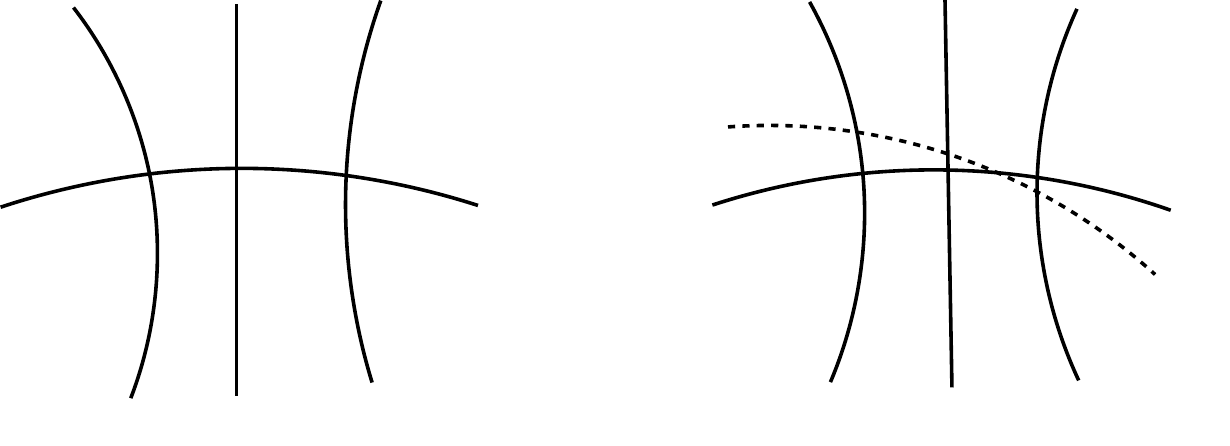
\caption{}
\label{fig:pp_H}
\end{figure}

\vspace{2mm}

\noindent\textbf{\textsf{\underline{Claim. The number of lifts of $\gamma$ separating $\tilde\beta_{-1}$ from $\tilde\beta_1$ is at most $2 \cdot i(\alpha, \gamma)$.}}}
\vspace{1mm}

Let $h$ be a primitive element of $\pi_1(\Sigma)$ preserving $\tilde\alpha$ such that $h\tilde\beta_{-1}$ lies in the connected component of $\mathbb{H}^2 \setminus \tilde\beta_{-1}$ containing $\tilde\beta_0$.
Because $h \tilde\beta_{-1}$ cannot separate $\tilde\beta_0$ from $\tilde\beta_{-1}$, either $h \tilde\beta_{-1} = \tilde\beta_0$ or $\tilde\beta_0$ separates $\tilde\beta_{-1}$ from $h \tilde\beta_{-1}$.
Since $h$ sends $\tilde\beta_0$ into the connected component of $\mathbb{H}^2 \setminus \tilde\beta_0$ containing $\tilde\beta_1$, either $h \tilde\beta_0 = \tilde\beta_1$ or $\tilde\beta_1$ separates $\tilde\beta_0$ from $h \tilde\beta_0$.
Combining these facts, we obtain that either $h^2 \tilde\beta_{-1} = \tilde\beta_1$ or $\tilde\beta_1$ separates $\tilde\beta_{-1}$ from $h^2 \tilde\beta_{-1}$.

Let $\mathcal{Y}_\gamma$ be the dual tree of $\gamma$.
Let $\Phi_\gamma := \Phi_{\gamma, r}: \mathbb{H}^2 \to \mathcal{Y}_\gamma$ be a $\pi_1(\Sigma)$-equivariant surjective continuous map satisfying Proposition \ref{prop:hyp_tree} for some sufficiently small number $r > 0$.
For each $i = -1, 0, 1$, it holds that $\Phi_\gamma(\tilde\alpha)$ intersects $\Phi_\gamma(\tilde\beta_i)$ because $\tilde\alpha \pitchfork \tilde\beta_i$.
See Figure \ref{fig:geods_tree}.
Every path joining $\Phi_\gamma(\tilde\beta_{-1})$ and $ \sigma_\gamma(h^2) \Phi_\gamma(\tilde\beta_{-1})$ intersects $\Phi_\gamma(\tilde\beta_1)$ by the above.
So we have $d_\gamma(\Phi_\gamma(\tilde\beta_{-1}), \Phi_\gamma(\tilde\beta_1)) \leq d_\gamma(\Phi_\gamma(\tilde\beta_{-1}), \sigma_\gamma(h^2) \Phi_\gamma(\tilde\beta_{-1}))$.

Because $\mathcal{Y}_\gamma$ is a tree, the shortest geodesic between $\Phi_\gamma(\tilde\beta_{-1})$ and $\Phi_\gamma(\tilde\beta_1)$ is contained in $\Phi_\gamma(\tilde\alpha)$.
Since the translation length of $h$ on $\mathcal{Y}_\gamma$ is $i(\alpha, \gamma)$, it is satisfied that $$d_\gamma(\Phi_\gamma(\tilde\beta_{-1}), \Phi_\gamma(\tilde\beta_1)) \leq 2 \cdot i(\alpha, \gamma).$$
So the claim holds.
\vspace{2mm}

\begin{figure}
\centering
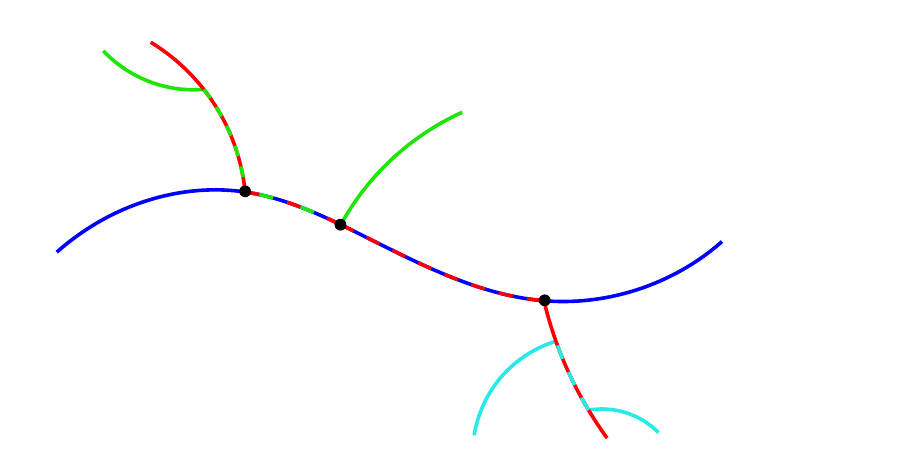
\caption{}
\label{fig:geods_tree}
\end{figure}

Let $v$ be the vertex on the dual tree $\mathcal{Y}_\beta$ of $\beta$, which lies on the geodesic segment connecting $\Phi_\beta(\tilde\beta_{-1})$ and $\Phi_\beta(\tilde\beta_0)$.
Let $\tilde{T}_\beta$ be the lift of a Dehn twist $T_\beta$ which fixes the closed $1$-neighborhood of $v$. (\emph{cf.} Proposition \ref{prop:dehn_cal})
Because $\Phi_\beta(\tilde\beta_{-1})$ and $\Phi_\beta(\tilde\beta_0)$ are contained in the closed $1$-neighborhood of $v$, we have $\tilde{T}_\beta^m \tilde\beta_{-1} = \tilde\beta_{-1}$ and $\tilde{T}_\beta^m \tilde\beta_0 = \tilde\beta_0$.
On the other hand, since $\tilde\beta_0$ is the unique lift of $\beta$ separating $\tilde\beta_1$ from $\Phi_\beta^{-1}(v)$, we have $\tilde{T}_\beta^m \tilde\beta_1 = h_0^m \tilde\beta_1$ for some primitive element $h_0$ preserving $\tilde\beta_0$ by Proposition \ref{prop:dehn_cal}.
\vspace{2mm}

\noindent\textbf{\textsf{\underline{Claim. The number of lifts of $\gamma$ separating $\tilde\beta_{-1}$ from $\tilde{T}_\beta^m\tilde\beta_1$ is at most $n$.}}}
\vspace{1mm}

If $\mathrm{proj}: \mathcal{Y}_\gamma \to \Phi_\gamma(\tilde\beta_0)$ be the closest-point projection to $\Phi_\gamma(\tilde\beta_0)$, let $P_i$ denote $\mathrm{proj}(\Phi_\gamma(\tilde\beta_i))$ for each $i = -1, 1$.
Then the length $l(P_i)$ of each $P_i$ is at most $i(\beta, \gamma)$ by Proposition \ref{prop:num_lift}.
Let $w'$ and $w''$ be the vertices in $P_1$ satisfying that $d_\gamma(w', \sigma_\gamma(h_0^m) w'') = d_\gamma(P_1, \sigma_\gamma(h_0^m) P_1)$.
Since $\Phi_\gamma(\tilde\beta_0)$ is preserved by $h_0$,
\begin{align*}
d_\gamma(P_1, \sigma_\gamma(h_0^m) P_1) &= d_\gamma(w', \sigma_\gamma(h_0^m) w'') \\
& \geq d_\gamma(w', \sigma_\gamma(h_0^m) w') - d_\gamma(\sigma_\gamma(h_0^m) w', \sigma_\gamma(h_0^m) w'') \\
& \geq \lvert m \rvert \cdot \mathrm{tr}_\gamma\, h_0 - l(P_1) \\
& \geq (\lvert m \rvert - 1) \cdot i(\beta, \gamma).
\end{align*}
By the hypothesis and the previous claim,
\begin{align*}
d_\gamma(\Phi_\gamma(\tilde\beta_{-1}), \sigma_\gamma(h_0^m) \Phi_\gamma(\tilde\beta_1)) & \geq d_\gamma(P_{-1}, \sigma_\gamma(h_0^m) P_1) \\
& \geq d_\gamma(P_1, \sigma_\gamma(h_0^m) P_1) - d_\gamma(P_{-1}, P_1) - l(P_{-1}) \\
&\geq (\lvert m \rvert - 1) \cdot i(\beta, \gamma) - 2 \cdot i(\alpha, \gamma) - i(\beta, \gamma) \\
&\geq n.
\end{align*}
So there are at least $n$ edges of $\mathcal{Y}_\gamma$ between $\Phi_\gamma(\tilde\beta_{-1})$ and $\sigma_\gamma(h_0^m) \Phi_\gamma(\tilde\beta_1)$.
In other words, there are $n$ lifts $\tilde\gamma_1, \dots, \tilde\gamma_n$ of $\gamma$ separating $\tilde\beta_{-1}$ from $h_0^m \tilde\beta_1$.
We proved the claim.
\vspace{2mm}

\begin{figure}
\centering
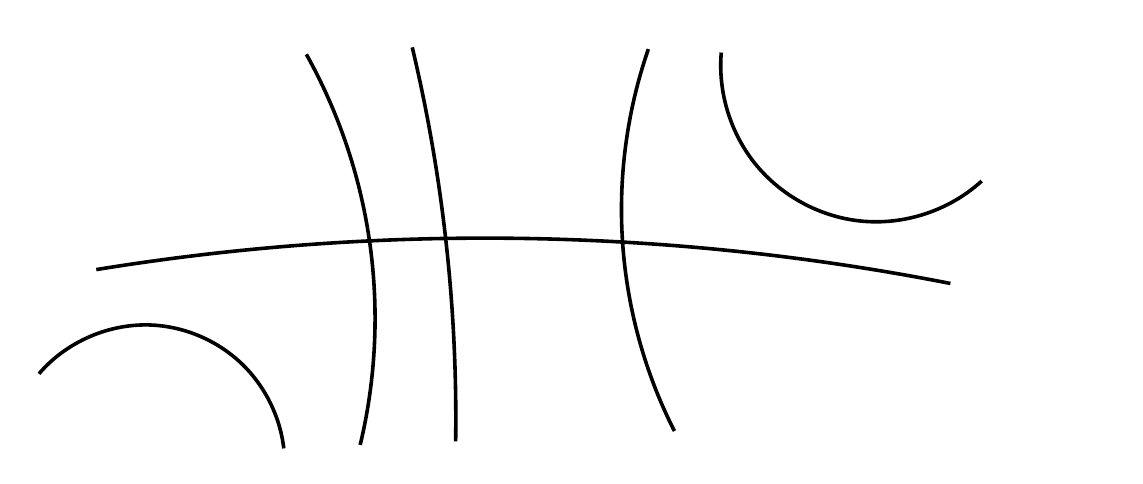
\caption{}
\label{fig:pp_HC}
\end{figure}

Since $\tilde{T}_\beta^m \tilde\delta$ crosses both $\tilde{T}_\beta^m \tilde\beta_{-1} = \tilde\beta_{-1}$ and $\tilde{T}_\beta^m \tilde\beta_1$, it also crosses all $\tilde\gamma_1, \dots, \tilde\gamma_n$.
See Figure \ref{fig:pp_HC}.
Note that $\tilde{T}_\beta^m\tilde\beta_0$ is the unique lift of $\mathcal{B}$ separating $\tilde\beta_{-1}$ from $\tilde{T}_\beta^m\tilde\beta_1$.
If a lift $\tilde\beta''$ of $\mathcal{B}$ intersects $\tilde{T}_\beta^m\tilde\delta$ and some $\tilde\gamma_i$, then it must separates $\tilde\beta_{-1}$ from $h_0^m\tilde\beta_1$ so that $\tilde\beta'' = \tilde\beta_0$.
Hence, $T_\beta^m\delta$ is contained in $\operatorname{PP}_n(\mathcal{B}, \beta, \{ \gamma \}, \gamma)$.
\end{proof}

\begin{proposition} \label{prop:fp_con}
Let $\mathcal{A}$ and $\mathcal{B}$ be multicurves, and let $\alpha \in \mathcal{A}$ and $\beta \in \mathcal{B}$ be simple closed geodesics such that $i(\alpha, \beta) > 0$.
Then
$$T_\gamma^m ( \operatorname{PP}_n(\mathcal{A}, \alpha, \mathcal{B}, \beta) ) \subset \operatorname{PP}_n(\mathcal{A}, \alpha, \mathcal{B}, \beta)$$
for all $\gamma \in \mathcal{A} - \{ \alpha \}$, $n > 2$ and $m \in \mathbb{Z}$.
\end{proposition}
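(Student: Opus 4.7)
The plan is to produce witnesses for $T_\gamma^m \delta$ by keeping the lifts $\tilde\alpha, \tilde\beta_1, \dots, \tilde\beta_n$ that witness $\delta \in \operatorname{PP}_n(\mathcal{A}, \alpha, \mathcal{B}, \beta)$ unchanged and replacing only $\tilde\delta$ by the geodesic $\tilde\delta^{*}$ with the same endpoints on $\partial\mathbb{H}^{2}$ as $\tilde{T}_\gamma^m \tilde\delta$, for some chosen lift $\tilde{T}_\gamma^m$ of $T_\gamma^m$. Then $\tilde\delta^{*}$ is a lift of the geodesic $T_\gamma^m \delta$. Conditions \ref{enum:fp-2} and \ref{enum:fp1} of Definition \ref{def:ppp} do not involve $\tilde\delta$, so they transfer for free; the work is in checking \ref{enum:fp2} and \ref{enum:fp3}.

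I would choose $T_\gamma$ so that its support lies in a small annular neighborhood $\mathcal{N}_r(\gamma)$ for $r$ satisfying (a) $\mathcal{N}_r(\gamma)$ is disjoint from every element of the multicurve $\mathcal{A}$, and (b) for each $i$, every lift $\tilde\gamma'$ of $\gamma$ disjoint from $\tilde\beta_i$ also satisfies $\mathcal{N}_r(\tilde\gamma') \cap \tilde\beta_i = \emptyset$. Condition (a) is possible because $\gamma \in \mathcal{A} \setminus \{\alpha\}$ is disjoint from every element of $\mathcal{A}$. Condition (b) follows from a standard collar-lemma computation applied to $\beta^{(i)}$, where $\tilde\beta_i$ covers $\beta^{(i)} \in \mathcal{B}$: for $r$ sufficiently small, every arc of $\gamma \cap \mathcal{N}_r(\beta^{(i)})$ crosses $\beta^{(i)}$, so no non-crossing lift of $\gamma$ can enter $\mathcal{N}_r(\tilde\beta_i)$.

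The central claim is that for every lift $\tilde\alpha'$ of $\mathcal{A}$ and every $i$,
$$\bigl|\tilde{T}_\gamma^m \tilde\delta \cap \tilde\alpha'\bigr| = \bigl|\tilde\delta \cap \tilde\alpha'\bigr|, \qquad \bigl|\tilde{T}_\gamma^m \tilde\delta \cap \tilde\beta_i\bigr| = \bigl|\tilde\delta \cap \tilde\beta_i\bigr|,$$
with each cardinality being $0$ or $1$. The first equality is easy: if $\tilde\alpha'$ is a lift of an element of $\mathcal{A} \setminus \{\gamma\}$, then by (a) it is disjoint from the support of $\tilde{T}_\gamma^m$, while if $\tilde\alpha'$ is a lift of $\gamma$, then $\tilde{T}_\gamma^m$ preserves $\tilde\alpha'$ setwise. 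The main obstacle is the second equality: a putative $q \in \tilde{T}_\gamma^m \tilde\delta \cap \tilde\beta_i$ can be written as $q = \tilde{T}_\gamma^m x$ with $x \in \tilde\delta$, and if $q \in \mathcal{N}_r(\tilde\gamma')$ for some lift $\tilde\gamma'$ of $\gamma$, then (b) forces $\tilde\gamma'$ to cross $\tilde\beta_i$, and condition \ref{enum:fp3} of Definition \ref{def:ppp} applied to $\delta$ with $\gamma \in \mathcal{A} \setminus \{\alpha\}$ forces $\tilde\gamma'$ to be disjoint from $\tilde\delta$; since $\tilde{T}_\gamma^m$ preserves each component of its support, $x$ would lie in $\mathcal{N}_r(\tilde\gamma') \cap \tilde\delta = \emptyset$, a contradiction. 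The same reasoning places the original point $\tilde\delta \cap \tilde\beta_i$ outside the support, so only this point can contribute.

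Granted these identities, since two geodesics in $\mathbb{H}^{2}$ cross if and only if their endpoints on $\partial \mathbb{H}^{2}$ are linked and $\tilde{T}_\gamma^m \tilde\delta$ meets each $\tilde\alpha'$ and each $\tilde\beta_i$ transversely at most once, the endpoints of $\tilde\delta^{*}$ are linked with those of $\tilde\alpha'$ (respectively $\tilde\beta_i$) precisely when those of $\tilde\delta$ are. Hence $\tilde\delta^{*}$ crosses exactly the same lifts of $\mathcal{A}$ and $\mathcal{B}$ as $\tilde\delta$, and conditions \ref{enum:fp2} and \ref{enum:fp3} of Definition \ref{def:ppp} for $(\tilde\delta^{*}, \tilde\alpha, \tilde\beta_1, \dots, \tilde\beta_n)$ translate directly to the same conditions for $\delta$, yielding $T_\gamma^m \delta \in \operatorname{PP}_n(\mathcal{A}, \alpha, \mathcal{B}, \beta)$.
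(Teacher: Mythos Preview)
There is a genuine gap, and it stems from a common misconception about lifts of Dehn twists. You treat $\tilde{T}_\gamma^m$ as if it were literally supported in $\xi^{-1}(\mathcal{N}_r(\gamma))$, i.e.\ the identity outside the preimage of the annulus. But no lift of $T_\gamma$ has this property: a lift $\tilde{T}_\gamma$ is the identity on exactly \emph{one} connected component of $\mathbb{H}^2\setminus\xi^{-1}(\mathcal{N}_r(\gamma))$, and on every other component it acts by a nontrivial deck transformation (this is precisely the content of Proposition~\ref{prop:dehn_cal} and Remark~\ref{rem:ann_twi}). Consequently all three of your key assertions fail. First, a lift $\tilde\alpha'$ of some $\alpha'\in\mathcal{A}\setminus\{\gamma\}$ lying in a non-fixed component is \emph{not} ``disjoint from the support of $\tilde{T}_\gamma^m$''; it gets moved to $g\tilde\alpha'$ for some $g\neq 1$, so $|\tilde{T}_\gamma^m\tilde\delta\cap\tilde\alpha'|=|\tilde\delta\cap g^{-1}\tilde\alpha'|$, which has no reason to equal $|\tilde\delta\cap\tilde\alpha'|$. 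Second, $\tilde{T}_\gamma^m$ does \emph{not} preserve every lift of $\gamma$ setwise; it preserves only those bounding the fixed component, while deeper lifts are translated. Third, $\tilde{T}_\gamma^m$ does \emph{not} preserve each component $\mathcal{N}_r(\tilde\gamma')$; it permutes them. So in your argument for the $\tilde\beta_i$ equality, when $q=\tilde{T}_\gamma^m x\in\mathcal{N}_r(\tilde\gamma')$ you only get $x\in\mathcal{N}_r(\tilde{T}_\gamma^{-m}\tilde\gamma')$, a neighborhood of a \emph{different} lift of $\gamma$, and condition~\ref{enum:fp3} for $\delta$ tells you nothing about that one.

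The paper's proof avoids this trap by never leaving the single component $C$ on which $\tilde{T}_\gamma$ really is the identity. It picks the lift $\tilde{T}_\gamma$ fixing the component containing $\tilde\alpha$, and then introduces auxiliary lifts $\tilde\alpha_{-1},\tilde\alpha_1$ of $\mathcal{A}$ that are adjacent to $\tilde\alpha$ along $\tilde\delta$; because no lift of $\gamma\in\mathcal{A}$ can lie strictly between them, both $\tilde\alpha_{\pm1}$ lie in (or bound) $C$ and hence are genuinely fixed by $\tilde{T}_\gamma^m$. Since $\tilde\delta$ crosses both, so does $\tilde{T}_\gamma^m\tilde\delta$, and a separation argument (each $\tilde\beta_i$ separates $\tilde\alpha_{-1}$ from $\tilde\alpha_1$) then delivers \ref{enum:fp2} and \ref{enum:fp3} without any global claim about which lifts $\tilde{T}_\gamma^m\tilde\delta$ crosses. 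Your argument can be repaired along these lines, but as written the central claim is false.
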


\begin{proof}
Choose $\delta \in \operatorname{PP}_n(\mathcal{A}, \alpha, \mathcal{B}, \beta)$.
Let $\tilde\delta$ and $\tilde\alpha$ be lifts of $\delta$ and $\alpha$, respectively, and let $\tilde\beta_1, \dots, \tilde\beta_n$ be lifts of $\mathcal{B}$ satisfying \ref{enum:fp-2} to \ref{enum:fp3} of Definition \ref{def:ppp} for $\delta \in \operatorname{PP}_n(\mathcal{A}, \alpha, \mathcal{B}, \beta)$.
Let $\tilde\alpha_{-1}$ and $\tilde\alpha_1$ be lifts of $\mathcal{A}$ such that $\tilde\alpha$ is the unique lift of $\mathcal{A}$ separating $\tilde\alpha_{-1}$ from $\tilde\alpha_1$.
Because $\gamma$ is disjoint from $\alpha$, there is a lift $\tilde{T}_\gamma$ of a Dehn twist $T_\gamma$ whose support is disjoint from $\tilde\alpha$.

In this case, we have $\tilde{T}_\gamma^m \tilde\alpha_i = \tilde\alpha_i$ for all $i = -1, 1$.
So $\tilde{T}_\gamma^m \tilde\delta$ still crosses both $\tilde\alpha_{-1}$ and $\tilde\alpha_1$.
It implies that all $\tilde\beta_1, \dots, \tilde\beta_n$ cross $\tilde{T}_\gamma^m \tilde\delta$.
Because $\tilde\alpha$ is the unique lift of $\mathcal{A}$ separating $\tilde\alpha_{-1}$ from $\tilde\alpha_1$, \ref{enum:fp3} of Definition \ref{def:ppp} also holds for $\tilde{T}_\gamma \tilde\beta$.
Therefore, $T_\gamma^m \delta \in \operatorname{PP}_n(\mathcal{A}, \alpha, \mathcal{B}, \beta)$.
\end{proof}

\begin{proof}[Proof of Theorem \ref{thm:pp_main}] \label{pf:pp_main}
\noindent \eqref{enum:pmn1} and \eqref{enum:pmn3} are direct consequences of Lemma \ref{lem:no_inc} and Proposition \ref{prop:fp_con}.
\vspace{1mm}

\noindent \eqref{enum:pmn2} By Lemma \ref{lem:admi}\eqref{enum:ad_len}, \eqref{enum:ad_for} and \eqref{enum:ad_lat}, we have $$\operatorname{PP}_n(\mathcal{A}, \alpha, \beta) \subseteq \operatorname{PP}_3(\{ \alpha \}, \alpha, \beta) \subseteq \operatorname{PP}_3(\{ \alpha \}, \alpha, \mathcal{B}, \beta).$$
Therefore, $T_\beta^m(\{ \alpha \} \cup \operatorname{PP}_n(\mathcal{A}, \alpha, \beta)) \subseteq \operatorname{PP}_n(\mathcal{B}, \beta, \gamma)$ by Proposition \ref{prop:half_ele}.
\end{proof}

\bibliographystyle{amsalpha}
\bibliography{references}

\end{document}